\newtheorem{theorem}{Theorem}
\newtheorem{remark}{Remark}
\newtheorem{definition}{Definition}
\newtheorem{assumption}{Assumption}
\newcommand\numberthis{\addtocounter{equation}{1}\tag{\theequation}}
\newcommand{\mb}{\mathbf}
\newcommand{\mc}{\mathcal}
\DeclareMathOperator*{\argmin}{argmin}
\newcounter{protocol}%
\newenvironment{protocol}[1][]%
  {%
    \vspace{-2pt}
     \needspace{2\baselineskip}%
    \noindent \rule{\linewidth}{1pt} \endgraf%
    \refstepcounter{protocol}%
    \centering \textsc{Protocol}~\theprotocol%
    \ifthenelse{\isempty{#1}}{}{:\ #1}%
  }{%
  \vspace{-6pt}
  \noindent \rule{\linewidth}{1pt}\vspace{2pt}%
  }
\newcounter{myalg}%
\newenvironment{myalg}[1][]%
  {%
  \vspace{-2pt}
     \needspace{2\baselineskip}%
    \noindent \rule{\linewidth}{1pt} \endgraf%
    \refstepcounter{protocol}%
    \centering \textsc{Algorithm}~\theprotocol%
    \ifthenelse{\isempty{#1}}{}{:\ #1}%
  }{%
  \vspace{-6pt}
  \noindent \rule{\linewidth}{1pt}\vspace{2pt}%
  }
\title{\LARGE \bf
Cloud-based MPC with Encrypted Data
}
\author{Andreea B. Alexandru \and Manfred Morari \and George J. Pappas\\
\thanks{The authors are with the Department of Electrical and Systems Engineering, 
	University of Pennsylvania, Philadelphia, PA 19104.
        {\tt\footnotesize \{aandreea, morari, pappasg\}@seas.upenn.edu}.}%
}
\begin{document}

\maketitle
\thispagestyle{plain}
\pagestyle{plain}

\begin{abstract}
This paper explores the privacy of cloud outsourced Model Predictive Control (MPC) for a linear system with input constraints. In our cloud-based architecture, a client sends her private states to the cloud who performs the MPC computation and returns the control inputs. In order to guarantee that the cloud can perform this computation without obtaining \textit{anything} about the client's private data, we employ a partially homomorphic cryptosystem. We propose protocols for two cloud-MPC architectures motivated by the current developments in the Internet of Things: a \mbox{client-server} architecture and a \mbox{two-server} architecture. In the first case, a control input for the system is privately computed by the cloud server, with the assistance of the client. In the second case, the control input is privately computed by two independent, non-colluding servers, with no additional requirements from the client. We prove that the proposed protocols preserve the privacy of the client's data and of the resulting control input. Furthermore, we compute bounds on the errors introduced by encryption. We present numerical simulations for the two architectures and discuss the trade-off between communication, MPC performance and privacy.
\end{abstract}

\section{Introduction}
The increase in the number of connected devices, as well as their reduction in size and resources, determined a growth in the utilization of cloud-based services, in which a centralized powerful server offers on demand storage, processing and delivery capabilities to users. 
With the development of communication efficient algorithms, outsourcing computations to the cloud becomes very convenient. 
However, issues regarding the privacy of the shared data arise, as the users have no control over the actions of the cloud, which can leak or abuse the data it receives. 

Model Predictive Control (MPC) is a powerful scheme that is successfully deployed in practice~\cite{Mayne14model} for systems of varying dimension and architecture, including cloud platforms. 
In competitive scenarios, such as energy generation in the power grid, domestic scenarios, such as heating control in smart houses, or time-sensitive scenarios, such as traffic control, the control scheme should come with privacy guarantees to protect from eavesdroppers or from an untrustworthy cloud. 
For instance, in smart houses, \mbox{client-server} setups can be envisioned, where a local trusted computer aggregates the measurements from the sensors, but does not store their model and specifications and depends on a server to compute the control input or reference. The server can also posses other information, such as the weather. 
In a heating application, the parameters of the system can be known by the server, i.e., the energy consumption model of the house, but the data measurements and how much the owner wants to consume should be private. 
In traffic control, the drivers are expected to share their locations, which should remain private, but are not expected to contribute to the computation. Hence, the locations are collected and processed only at a single server's level, e.g., in a \mbox{two-server} setup, which then sends the result back to the cars or to traffic lights. 

Although much effort has been dedicated in this direction, a universally secure scheme that is able to perform locally, at the cloud level, any given functionality on the users' data has not been developed yet~\cite{vanDijk10}. 
For a single user and functionalities that can be described by boolean functions, fully homomorphic encryption (FHE)~\cite{GentryPhD,Brakerski11} guarantees privacy, but at high storage and complexity requirements~\cite{Poppelmann15}. 
For multiple users, the concept of functional privacy is required, which can be attained by functional encryption~\cite{Boneh11functional}, which was developed only for limited functionalities. 
More tractable solutions that involve multiple interactions between the participating parties to ensure the confidentiality of the users' data have been proposed. 
In \mbox{client-server} computations, where the users have a trusted machine, called the client, that performs computations of smaller intensity than the server, 
we mention partially homomorphic encryption (PHE)~\cite{Paillier99} and differential privacy (DP)~\cite{Dwork08}. 
In \mbox{two-server} computations, in which the users share their data to two \mbox{non-colluding} servers, the following solutions are available: secret sharing~\cite{Shamir79,Algesheimer02}, garbled circuits~\cite{Yao82,Bellare12}, Goldreich-Micali-Wigderson protocol~\cite{Goldreich87}, PHE~\cite{Erkin12}.

\subsection{Contributions} 
In this paper, we discuss the implicit MPC computation for a linear system with input constraints, where we privately compute a control input, while maintaining the privacy of the state, using a cryptosystem that is partially homomorphic, i.e., supports additions of encrypted data. 
In the first case we consider, the control input is privately computed by a server, with the help of the client. In the second case, the computation is performed by two \mbox{non-colluding} servers. 
The convergence of the state trajectory to the reference is public knowledge, so it is crucial to not reveal anything else about the state and other sensitive quantitities. 
Therefore, we use a privacy model that stipulates that \textit{no computationally efficient algorithm run by the cloud can infer anything about the private data}, or, in other words, an adversary doesn't know more about the private data than a random guess. 
Although this model is very strict, it thoroughly characterizes the loss of information.

This work explores fundamental issues of privacy in control: the trade-off between computation, communication, performance and privacy. We present two main contributions: proposing two privacy-preserving protocols for MPC and evaluating the errors induced by the encryption. 

\subsection{Related work}
In control systems, ensuring the privacy of the measurements and control inputs from eavesdroppers and from the controller has been so far tackled with differential privacy, homomorphic encryption and transformation methods. 
Kalman filtering with DP was addressed in~\cite{leNy14differentially}, current trajectory hiding in~\cite{Koufogiannis17}, linear distributed control~\cite{Wang2017differential}, and distributed MPC in~\cite{Zellner17}. 
The idea of encrypted controllers was introduced in~\cite{Kogiso15} and~\cite{Farokhi17}, using PHE, and in~\cite{Kim16encrypting} using FHE. Kalman filtering with PHE was further explored in~\cite{Xu17secure}. 
Optimization problems with DP are addressed in~\cite{Nozari16,Han17} and PHE in~\cite{Shoukry16,Freris16,Alexandru17,Lu2018privacy}. 
Many works proposed privacy through transformation methods that use multiplicative masking. While the computational efficiency of such methods is desirable, their privacy cannot be rigorously quantified, as required by our privacy model, since the distribution of the masked values is not uniform~\cite{deHoogh12phd}.

Recent work in~\cite{Darup18} has tackled the problem of privately computing the input for a constrained linear system using explicit MPC, in a \mbox{client-server} setup. 
There, the client performs the computationally intensive trajectory localization and sends the result to the server, which then evaluates the corresponding affine control law on the encrypted state using PHE. 
Although explicit MPC has the advantage of computing the parametric control laws offline, the evaluation of the search tree at the cloud's level is intractable when the number of nodes is large, since all nodes have to be evaluated in order to not reveal the polyhedra the in which the state lies, and comparison cannot be performed locally on encrypted data. 
Furthermore, the binary search in explicit MPC is intensive and requires the client to store all the characterization of the polyhedra, which we would like to avoid. Taking this into consideration, we focus on implicit MPC.

The performance degradation of a linear controller due to encryption is analyzed in~\cite{Kogiso2018upper}. In our work, we investigate performance degradation for the nonlinear control obtained from MPC. 


\section{Problem setup}
We consider a discrete-time linear time-invariant system:
\begin{equation}\label{eq:system}
	x(t+1) = Ax(t) + Bu(t),
\end{equation}
with the state $x\in\mc X\subseteq\mathbb R^{n}$ and the control input $u\in\mc U\subseteq\mathbb R^{m}$. The optimal control receding horizon problem with constraints on the states and inputs can be written as:
\vspace{-\topsep}
\begin{align*}
	J_N^\ast(x(t)) =& \min\limits_{u_{0,\ldots,N-1}}\frac{1}{2} \left(x_N^\intercal P x_N + \sum_{k=0}^{N-1} x_k^\intercal Q x_k + u_k^\intercal R u_k \right)\\
	s.t.&~ x_{k+1} = Ax_k + Bu_k,~k=0,\ldots,N-1		\numberthis \label{eq:mpc}\\
	&~x_k\in \mc X,~u_k\in \mc U,~k=0,\ldots,N-1\\
	&~x_N \in \mc X_f,~~x_0 = x(t),
\end{align*}
where $N$ is the length of the horizon and $P,Q,R\succ 0$ are cost matrices. 
For reasons related to error bounding, explained in Section~\ref{sec:fixed-pointMPC}, in this paper, we consider input constrained systems: \mbox{$\mc X = \mathbb R^n$}, $0\in\mc U = \{-l_u\preceq u\preceq h_u\}$, and impose stability without a terminal state constraint, i.e. $ \mc X_f = \mathbb R^n$, but with appropriately chosen costs $P,Q,R$ and horizon $N$ such that the \mbox{closed-loop} system has robust performance to bounded errors due to encryption, which will be described in Section~\ref{sec:fixed-pointMPC}. 
A survey on the conditions for stability of MPC is given in~\cite{Mayne00}. 

Through straightforward manipulations,~\eqref{eq:mpc} can be written as a quadratic program (see details on obtaining the matrices $H$ and $F$ in~\cite[Ch.~8,11]{MPCbook17}) in the variable \mbox{$U:= \left[u_0 \ u_1 \ \ldots \ u_{N-1} \right]^\intercal$}. 
\begin{equation}\label{eq:mpc(i)}
	U^\ast(x(t))~= \argmin\limits_{U\in \mc U} \frac{1}{2}U^\intercal H U + U^\intercal F^\intercal x(t)
\end{equation}

For simplicity, we keep the same notation for the augmented constraint set $\mc U$. 
After obtaining the optimal solution, the first $m$ components of $U^\ast(x(t))$ are applied as input to the system~\eqref{eq:system}: \mbox{$u^\ast(x(t)) =(U^\ast(x(t)))_{1:m}$}. 
This problem easily extends to the case of following a reference.

\subsection{Solution without privacy requirements}
The constraint set $\mc U$ is a hyperbox, so the projection step required for solving~\eqref{eq:mpc(i)} has a simple closed form solution and the optimization problem can be efficiently solved with the projected Fast Gradient Method (FGM)~\cite{Nesterov13book}, given in Algorithm~\ref{alg:FGM}. 
The objective function is strongly convex, since \mbox{$H\succ 0$}, therefore we can use the constant step sizes \mbox{$L = \lambda_{max}(H)$} and \mbox{$\eta = (\sqrt{\kappa(H)}-1)/(\sqrt{\kappa(H)}+1)$}, where $\kappa(H)$ is the condition number of~$H$. 
Warm starting can be used at subsequent time steps of the receding horizon problem by using part of the previous solution~$U_K$ to construct a feasible initial iterate of the new optimization problem.

\begin{myalg}[Projected Fast Gradient Descent] \label{alg:FGM}
\begin{algorithmic}[1]
\small
  \Require{$H,F,x(t),\mc U,L,\kappa(H),\eta,U_0\in\mc U, z_0 = U_0,K$}
 \Ensure{$U_K(x(t))$}
\For{k=0\ldots,K-1}
 	\State $t_k = (\mathbf I_{Nm} - \frac{1}{L}H)z_k - \frac{1}{L}F^\intercal x(t)$
	\State $U_{k+1}^i = \begin{cases} -l_u^i, & \text{if }t_k^i < -l_u^i \\ t_k^i, & \text{if }t_k^i\in[-l_u^i,~h_u^i] \\ h_u^i, & \text{if }t_k^i > h_u^i\end{cases}$, $i=1,\ldots,Nm$
	\State $z_{k+1} = (1+\eta)U_{k+1} - \eta U_{k}$
 \EndFor
\end{algorithmic}
\end{myalg}

\subsection{Privacy objectives}
The unsecure cloud-MPC problem is depicted in Figure~\ref{fig:unencrypted_data}. 
The system's constant parameters $A,B,P,Q,R,N$ are public, motivated by the fact the parameters are intrinsic to the system and hardware, and could be guessed or identified; however, the measurements, control inputs and constraints are not known and should remain private. 
The goal of this work is to devise private cloud-outsourced versions of Algorithm~\ref{alg:FGM} such that the client obtains the control input $u^\ast(t)$ for system~\eqref{eq:system} with only a minimum amount of computation. 
The cloud (consisting of either one or two servers) should not infer anything else than what was known prior to the computation about the measurements $x(t)$, the control inputs $u^\ast(t)$ and the constraints~$\mc U$. 
We tolerate \textbf{\mbox{semi-honest}} servers, meaning that they correctly follow the steps of the protocol but may store the transcript of the messages exchanged and process the data received to try to learn more information than allowed.
 
 \begin{figure}[h]
  \centering
    \includegraphics[width=0.28\textwidth]{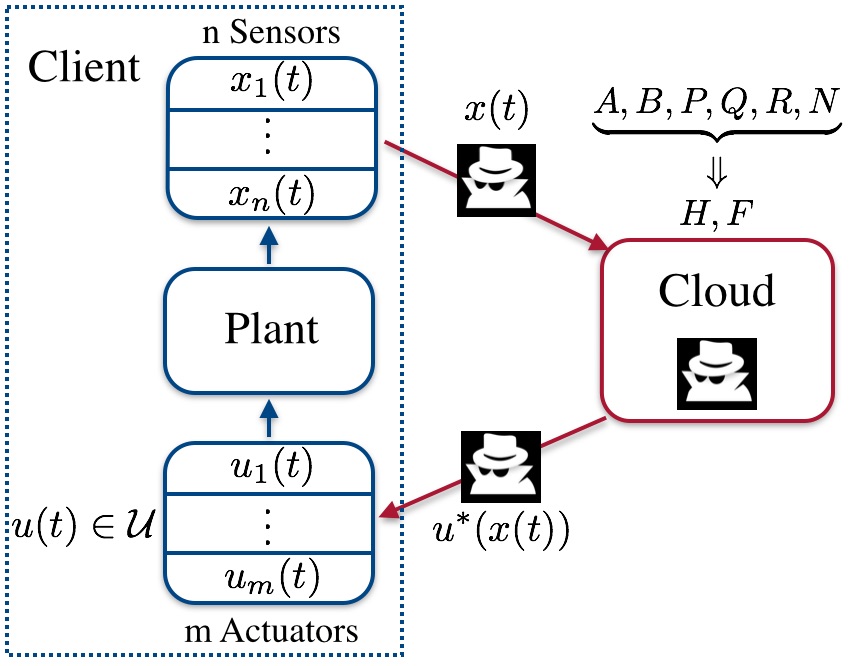}
  \caption{Unsecure MPC: the system model $A,B$, horizon $N$ and costs $P$, $Q$, $R$ are public. The state $x(t)$, control input $u^\ast(t)$ and input constraints $\mc U$ are privacy-sensitive. The red lines represent entities that can be eavesdropped.}
     \vspace{-0.2cm}
   \label{fig:unencrypted_data}
\end{figure}
 
To formalize the privacy objectives, we introduce the privacy definitions that we want our protocols to satisfy, described in~\cite[Ch.~3]{Goldreich03foundations|},~\cite[Ch.~7]{Goldreich04foundations||}. In what follows, $\{0,1\}^\ast$ defines a sequence of bits of unspecified length. Given a countable index set $I$, an ensemble $X = \{X_i\}_{i\in I}$, indexed by $I$, is a sequence of random variables $X_i$, for all $i\in I$. 

\begin{definition}\label{def:stat_ind}
The ensembles $X=\{X_n\}_{n\in \mathbb N}$ and $Y=\{Y_n\}_{n\in \mathbb N}$ are \textbf{statistically indistinguishable}, denoted $\stackrel{s}{\equiv}$, if for every positive polynomial $p$ and all sufficiently large $n$, the following holds, where $\Omega = \text{Supp}(X)\cup \text{Supp}(Y)$: 
\[1/2\sum_{\alpha\in\Omega} \left|\text{Pr}[X_n = \alpha] - \text{Pr}[Y_n = \alpha]\right|< 1/p(n).\]
\end{definition}
\vspace{\topsep}

Two ensembles are called computationally indistinguishable if no \textit{efficient} algorithm can distinguish between them. This is a weaker version of statistical indistinguishability. 

\begin{definition}\label{def:comp_ind}
The ensembles $X=\{X_n\}_{n\in\mathbb N}$ and $Y=\{Y_n\}_{n\in\mathbb N}$ are \textbf{computationally indistinguishable}, denoted~$\stackrel{c}{\equiv}$, if for every probabilistic polynomial-time algorithm $D:\{0,1\}^\ast\rightarrow \{0,1\}$, called the distinguisher, every positive polynomial $p$, and all sufficiently large $n$, the following holds:
\[\big |\text{Pr}_{x\leftarrow X_n} [D(x) = 1] - \text{Pr}_{y\leftarrow Y_n}[D(y) = 1] \big| < 1/p(n).\]
\end{definition}
\vspace{\topsep}

The definition of \textbf{\mbox{two-party} privacy} says that a protocol privately computes the functionality it runs if all information obtained by a party after the execution of the protocol, while also keeping a record of the intermediate computations, can be obtained only from the inputs and outputs of that party. 

\begin{definition}\label{def:view}
 Let $f\hspace{-0.2cm}:\hspace{-0.1cm}\left(\{0,1\}^\ast \right)^2\hspace{-0.1cm}\rightarrow\hspace{-0.1cm}\left(\{0,1\}^\ast \right)^2$ be a func-tionality, and $f_i(x_1,x_2)$ be the $i$th component of $f(x_1,x_2)$, $i=1,2$. 
Let $\Pi$ be a \mbox{two-party} protocol for computing $f$. The \textbf{view} of the $i$th party during an execution of $\Pi$ on the inputs $(x_1,x_2)$, denoted by $V^\Pi_i (x_1,x_2)$, is $(x_i,coins,m_1,\ldots,m_t)$, where $coins$ represents the outcome of the $i$th party's internal coin tosses, and $m_j$ represents the $j$th message it has received. For a deterministic functionality $f$, we say that $\Pi$ \textbf{privately computes} $f$ if there exist probabilistic polynomial-time algorithms, called \textbf{simulators}, denoted by $S_i$, such that:
 \[\{S_i(x_i,f_i(x_1,x_2)) \}_{x_{1,2}\in\{0,1\}^{\ast}} \stackrel{c}{\equiv} \{V_i^\Pi (x_1,x_2)\}_{x_{1,2}\in\{0,1\}^\ast}.\]
When the privacy is \mbox{one-sided}, i.e., only the part of the protocol executed by party 2 has to not reveal any information, the above equation has to be satisfied only for $i=2$.
 \end{definition}
 \vspace{\topsep}
 
The purpose of the paper is to design protocols with the functionality of Algorithm~\ref{alg:FGM} that satisfy Definition~\ref{def:view}. 
To this end, we use the encryption scheme defined in Section~\ref{sec:paillier}. Furthermore, we discuss in Section~\ref{sec:fixed-pointMPC} how we connect the domain of the inputs in Definiton~\ref{def:view} with the domain of real numbers needed for the MPC problem. 
In Sections~\ref{sec:CS} and~\ref{sec:SS}, we address two private cloud-MPC solutions that present a \mbox{trade-off} between the computational effort at the client and the total time required to compute the solution $u^\ast(t)$, which is analyzed in Section~\ref{sec:fixed-pointMPC}.

The MPC literature has focused on reducing the computational effort through computing a suboptimal solution to implicit MPC~\cite{Pannocchia11,Richter12}. 
Such time optimizations, i.e., stopping criteria, reveal information about the private data, such as how far is the initial point from the optimum or the difference between consecutive iterates. 
Therefore, in this work, we consider a given fixed number of iterations $K$. 

\section{Partially homomorphic cryptosystem}\label{sec:paillier}
Partially homomorphic encryption schemes can support additions between encrypted data, such as Paillier~\cite{Paillier99} and Goldwasser-Micali~\cite{GM82}, DGK~\cite{DGK07}, or multiplications between encrypted data, such as El Gamal~\cite{ElGamal84} and unpadded RSA~\cite{Rivest78data}. 

In this paper, we use the Paillier cryptosystem~\cite{Paillier99}, which is an asymmetric additively homomorphic encryption scheme. 
The message space for the Paillier scheme is $\mathbb Z_{N_\sigma}$, where $N_\sigma$ is a large integer that is the product of two prime numbers $p,q$. 
The pair of keys corresponding to this cryptosystem is $(pk,sk)$, where the public key is $pk= (N_\sigma,g)$, with $g\in\mathbb Z_{N_\sigma^2}$ having order $N_\sigma$ and the secret key is $sk = (\gamma,\delta)$:
\[\gamma = \text{lcm}(p-1,q-1), \delta = ((g^\gamma~\text{mod}~N_\sigma^2 -1)/N_\sigma)^{-1}~\text{mod}~N_\sigma.\]

For a message $a\in \mathbb Z_{N_\sigma}$, called plaintext, the Pailler encryption primitive is defined as: 
\[[[a]]:= g^ar^{N_\sigma}\bmod N_\sigma^2,~~\text{with $r$ random value in }\mathbb Z_{N_\sigma}.\] 
The encrypted messages are called ciphertexts.

A probabilistic encryption scheme, i.e., that takes random numbers in the encryption primitive, does not preserve the order from the plaintext space to the ciphertext space.

Intuitively, the additively homomorphic property states that there exists an operator $\oplus$ defined on the space of encrypted messages, with:
\[[[a]]\oplus[[b]] = [[a+b]],~~\forall a,b\in\mathbb Z_{N_\sigma},\]
where the equality holds in modular arithmetic, w.r.t. $N_\sigma^2$. 
Formally, the decryption primitive is a homomorphism between the group of ciphertexts, with the operator $\oplus$ and the group of plaintexts with addition~$+$. 

The scheme also supports subtraction between ciphertexts, and multiplication between a plaintext and an encrypted message, obtained by adding the encrypted messages for the corresponding (integer) number of times: \mbox{$b\otimes[[a]]=[[ba]]$}. 
We will use the same notation to denote encryptions, additions and multiplication by vectors and matrices. 

Proving privacy in the \mbox{semi-honest} model of a protocol that makes use of cryptosystems involves the concept of semantic security. 
Under the assumption of decisional composite residuosity~\cite{Paillier99}, the Paillier cryptosystem is semantically secure and has indistinguishable encryptions, which, in essence, means that an adversary cannot distinguish between the ciphertext $[[a]]$ and a ciphertext $[[b]]$ based on the messages $a$ and $b$~\cite[Ch.~5]{Goldreich04foundations||}.

\begin{definition}\label{def:semantic}
An encryption scheme with encryption primitive $E(\cdot)$ is \textbf{semantically secure} if for every probabilistic polynomial-time algorithm, A, there exists a probabilistic polynomial-time algorithm $A'$ such that for every two polynomially bounded functions $f,h:\{0,1\}^\ast\rightarrow \{0,1\}^\ast$ and for any probability ensemble $\{X_n\}_{n\in\mathbb N}$, $|X_n|\leq poly(n)$, for any positive polynomial $p$ and sufficiently large $n$: 
\begin{align*}
\text{Pr}& \left[ A(E(X_n),h(X_n)) = f(X_n)\right] < \\ \text{Pr}& \left[ A'(h(X_n)) = f(X_n)\right] + 1/p(n).
\end{align*}
\end{definition}
\vspace{\topsep}

\emph{Cloud-based Linear Quadratic Regulator}: 
We provide a simple example of how the Paillier encryption can be used in a private control application. If the problem is unconstrained, i.e., $\mc U=\mathbb R^m$, a stabilizing controller can be computed as a linear quadratic regulator~\cite[Ch.~8]{MPCbook17}. 
Such a controller can be computed only by one server. The client sends the encrypted state $[[x(t)]]$ to the server, which recursively computes the plaintext control gain and solution to the Discrete Riccati Equations, with $P_N = P$:
\begin{align*}
	&F_k = -(B^\intercal P_{k+1}B + R)^{-1} B^\intercal P_{k+1}A,\quad k=N-1,\ldots,0\\
	&P_k = A^\intercal P_{k+1}A + Q + A^\intercal P_{k+1} B F_k,\\
	&[[u^\ast(t)]] = F_0\otimes[[x(t)]].
\end{align*}
The server obtains the encrypted control input and returns it to the client, which decrypts and inputs it to the system.

To summarize, PHE allows a party that does not have the private key to perform linear operations on encrypted integer data. 
For instance, a cloud-based Linear Quadratic Controller can be computed entirely by one server, because the control action is linear in the state. 
Nonlinear operations are not supported within this cryptosystem, but can be achieved with communication between the party that has the encrypted data and the party that has the private key, and we will use this in Sections~\ref{sec:CS} and~\ref{sec:SS}. 


\section{Client-Server architecture}\label{sec:CS}
To be able to use the Paillier encryption, we need to represent the messages on a finite set of integers, parametrized by $N_\sigma$, i.e., each message is an element in $\mathbb Z_{N_\sigma}$. 
Usually, the values less than $N_\sigma/3$ are interpreted to be positive, the numbers between $2N_\sigma/3$ and $N_\sigma$ to be negative, and the rest of the range allows for overflow detection. 
In this section and Section~\ref{sec:SS}, we consider a \mbox{fixed-point} representation of the values and perform implicit multiplication steps to obtain integers and division steps to retrieve the true values. 
We analyze the implications of the \mbox{fixed-point} representation over the MPC solution in Section~\ref{sec:fixed-pointMPC}.

\textit{Notation:} Given a real quantity $x\in\mathbb R$, we use the notation $\bar x$ for the corresponding quantity in \mbox{fixed-point} representation on one sign bit, $l_i$ integer and $l_f$ fractional bits. 

We introduce a \mbox{client-server} model, depicted in Figure~\ref{fig:client-server}. 
We present an interactive protocol that privately computes the control input for the client, while maintaining the privacy of the state in Protocol~\ref{pro:FGM_CS}. 
The Paillier encryption is not order preserving, so the projection operation cannot be performed locally by the server. 
Hence, the server sends the encrypted iterate $[[t_k]]$ for the client to project it. Then, the client encrypts the feasible iterate and sends it back to the cloud.

\begin{figure}[h]
  \centering
    \includegraphics[width=0.3\textwidth]{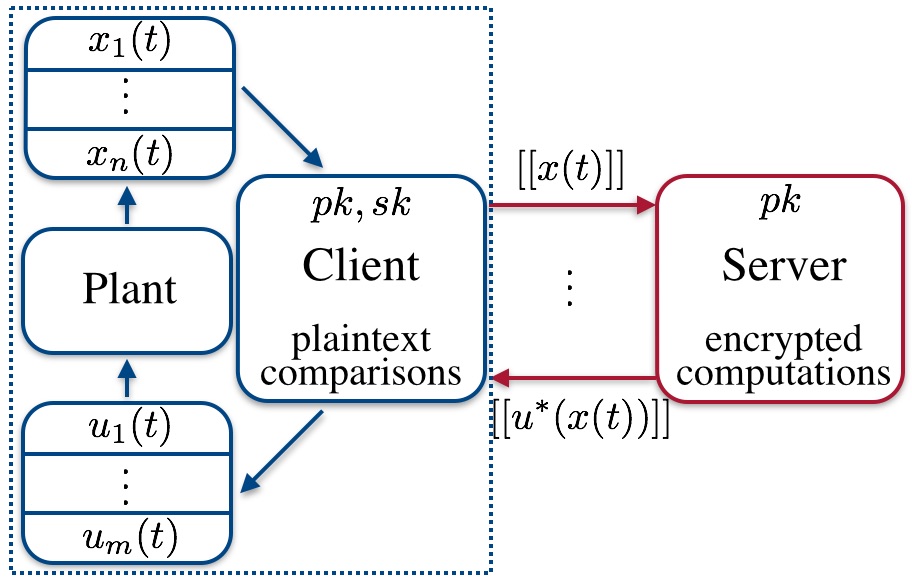}
       \vspace{-0.2cm}
    \caption{Private \mbox{client-server} MPC setup for a plant.}
   \label{fig:client-server}
\end{figure}

We drop the $\bar{(\cdot)}$ from the iterates in order to not burden the notation.

\begin{protocol}[Encrypted projected Fast Gradient Descent in a \mbox{client-server} architecture] \label{pro:FGM_CS}
\begin{algorithmic}[1]
\small
 \Require{$C$: $x(t), K_c,K_w, l_i, l_f,\bar{\mc U},pk,sk$;~$S$: $\bar H_f, \bar F,\bar\eta, K_c,  K_w, l_i, $ $ l_f,pk$, cold-start, $[[U_{w}]]$}
 \Ensure{$C$: $u = (U_K(x(t)))_{1:m}$}
  \State $C$: Encrypt and send $[[x(t)]]$ to $S$
  \If{cold-start}
	\State  $S$: $[[U_0]] = [[\mb 0_{Nm}]]$; 
	$C,S$: $K \leftarrow K_c$
 	\Else
	\State $S$: $[[U_0]] = \big[ [[(U_{w})_{m+1:Nm}]],[[\mb 0_m]]\big]$;
	$C,S$: $K \leftarrow K_w$
 \EndIf
 \State $S$: $[[z_0]] = [[U_0]]$
\For{k=0\ldots,K-1}
 	\State $S$: $[[ t_k]] = (\mathbf I_{Nm} - \bar H_f)\otimes [[z_k]] \oplus (- \bar F_f^\intercal) \otimes [[x(t)]]$ and send it to $C$
	\State $C$: Decrypt $t_k$ and truncate to $l_f$ fractional bits
	\State $C$: $U_{k+1} = \Pi_{\bar{\mc U}}(t_k)$ \Comment Projection on $\bar{\mc U}$
	\State $C$: Encrypt and send $[[U_{k+1}]]$ to $S$
	\State $S$: $[[z_{k+1}]] = (1+\bar \eta)\otimes [[U_{k+1}]] \oplus (-\bar\eta) \otimes[[U_{k}]]$
 \EndFor
 \State $C$: Decrypt and output $u = (U_K)_{1:m}$
\end{algorithmic}
\end{protocol}

\begin{theorem}\label{thm:client-server}
	Protocol~\ref{pro:FGM_CS} achieves privacy as in Definition~\ref{def:view} with respect to a semi-honest server.
\end{theorem}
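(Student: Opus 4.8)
The plan is to take advantage of the fact that Definition~\ref{def:view} is only required here in its one-sided form: the client holds $sk$ and its prescribed output $u=(U_K(x(t)))_{1:m}$ is the intended result of the computation, while the server has no output, so only the server's view has to be simulated. First I would make explicit the functionality $f$ that Protocol~\ref{pro:FGM_CS} computes, namely $f_1(x_1,x_2)=u$ and $f_2(x_1,x_2)=\lambda$ (empty), and then write down the server's view $V_2^\Pi(x_1,x_2)$. It consists of the server's input $x_2$ (containing $pk$, $\bar H_f$, $\bar F$, $\bar\eta$, the public iteration counts $K_c,K_w$, the cold-start flag, and $[[U_w]]$), its internal coin tosses, and the messages it \emph{receives} from the client. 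Inspecting the protocol, the received messages are exactly the $K+1$ ciphertexts $[[x(t)]]$ and $[[U_1]],\dots,[[U_K]]$, where $K=K_c$ or $K=K_w$ according to the public flag. Every other ciphertext the server manipulates — $[[U_0]]$, the $[[z_k]]$, and the $[[t_k]]$ it sends back — is a fixed, efficiently computable function of $x_2$, the coins and those received ciphertexts, so none of these need appear in the simulator's output.

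Next I would exhibit the simulator $S_2$: on input $(x_2,f_2)=(x_2,\lambda)$ it reads $pk$ and the vector dimensions $n,Nm$ off $\bar H_f$ and $\bar F$, draws coins from the same (input-independent) distribution as the server, and outputs $x_2$ together with $K+1$ independent fresh Paillier encryptions of $0$ under $pk$ in place of the received messages. To prove $\{S_2(x_2,\lambda)\}\stackrel{c}{\equiv}\{V_2^\Pi(x_1,x_2)\}$ I would run the standard hybrid argument over these $K+1$ ciphertexts: let $H_j$, for $j=0,\dots,K+1$, be the distribution in which the first $j$ received ciphertexts encrypt their true plaintexts ($x(t)$ in fixed point, then the honest iterates $U_1,U_2,\dots$) and the remaining $K+1-j$ encrypt $0$, so that $H_0$ is distributed as $S_2$ and $H_{K+1}$ as $V_2^\Pi$. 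From a distinguisher for $H_j$ versus $H_{j+1}$ I would build an adversary against indistinguishability of Paillier encryptions: knowing both parties' inputs, the reduction replays the honest plaintext computation of Protocol~\ref{pro:FGM_CS} to learn the true value $v$ of slot $j$, asks its challenger for an encryption of either $0$ or $v$, places that ciphertext in slot $j$, fills the earlier slots with true encryptions and the later slots with encryptions of $0$, applies the same deterministic post-processing the server performs, and feeds the result to the assumed distinguisher. This contradicts semantic security of Paillier (Definition~\ref{def:semantic}), which holds under the decisional composite residuosity assumption; since $K$ is polynomial in the security parameter, the telescoped advantage over the $K+1$ hybrids stays negligible.

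I expect the main obstacle to be careful bookkeeping rather than any hard estimate. Three points need attention. First, because the view contains only \emph{received} messages, the server-computed ciphertexts $[[t_k]]$ need not be simulated directly; the reason a hybrid over just the $K+1$ received ciphertexts suffices — despite the many homomorphic operations — is precisely that every sent or internal ciphertext is a fixed function of the simulated ``leaves'' and the coins, so distinguishing the full views would already distinguish the leaves. Second, both the cold-start and warm-start branches must be covered, which is automatic since the branch flag and $[[U_w]]$ are part of $x_2$ and are copied verbatim by $S_2$; in particular no information about the previous solution $U_w$ leaks, as it only ever appears encrypted. Third, $S_2$ must be able to recover from $x_2$ alone everything it needs (the public key, the vector dimensions and the iteration count $K$), which it can. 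Finally, it is worth stating explicitly why one-sided privacy is all that is claimed: the client does decrypt every intermediate iterate $t_k$, but since the client owns $x(t)$ and $\bar{\mc U}$ these reveal nothing beyond its own inputs, so Definition~\ref{def:view} is only required to hold for $i=2$.
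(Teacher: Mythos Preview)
Your proposal is correct and follows essentially the same approach as the paper's proof: identify the server's view as its inputs together with a collection of Paillier ciphertexts and no output, build a simulator that replaces those ciphertexts by fresh encryptions, and invoke semantic security of Paillier. The paper's argument is a three-sentence sketch that omits the explicit hybrid argument and (slightly imprecisely) lists the received messages as $\{[[U_k]]\}_{k=0,\ldots,K}$ rather than $[[x(t)]],[[U_1]],\dots,[[U_K]]$; your version is more careful on both counts, but it is not a different route.
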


\begin{proof}
The initial value of the iterate does not give any information to the server about the result, as the final result is encrypted and the number of iterations is a priori fixed. 
The view of the server, as in Definition~\ref{def:view}, is composed of the server's inputs, the messages received $\{[[U_k]]\}_{k=0,\ldots,K}$, which are all encrypted, and no output. 
We construct a simulator which replaces the messages with random encryptions of corresponding length. 
Due to the semantic security (see Definition~\ref{def:semantic}) of the Paillier cryptosystem, which was proved in~\cite{Paillier99}, the view of the simulator is computationally indistinguishable from the view of the server.
\end{proof}

\section{Two-server architecture}~\label{sec:SS}
Although in Protocol~\ref{pro:FGM_CS}, the client needs to store and process substantially less data than the server, the computational requirements might be too stringent for large values of $K$ and $N_\sigma$. 
In such a case, we outsource the problem to two servers, and only require the client to encrypt $x(t)$ and send it to one server and decrypt the received the result~$[[u^\ast]]$. 
In this setup, depicted in Figure~\ref{fig:two-server}, the existence of two \mbox{non-colluding} servers is assumed. 

In Figure~\ref{fig:two-server} and in Protocol~\ref{pro:FGM_SS}, we will denote by $[[\cdot]]$ a message encrypted with $pk_1$ and by $[\{\cdot\}]$ a message encrypted by $pk_2$. 
The reason we use two pairs of keys is so the client and support server do not have the same private key and do not need to interact.

\begin{figure}[h]
  \centering
    \includegraphics[width=0.325\textwidth]{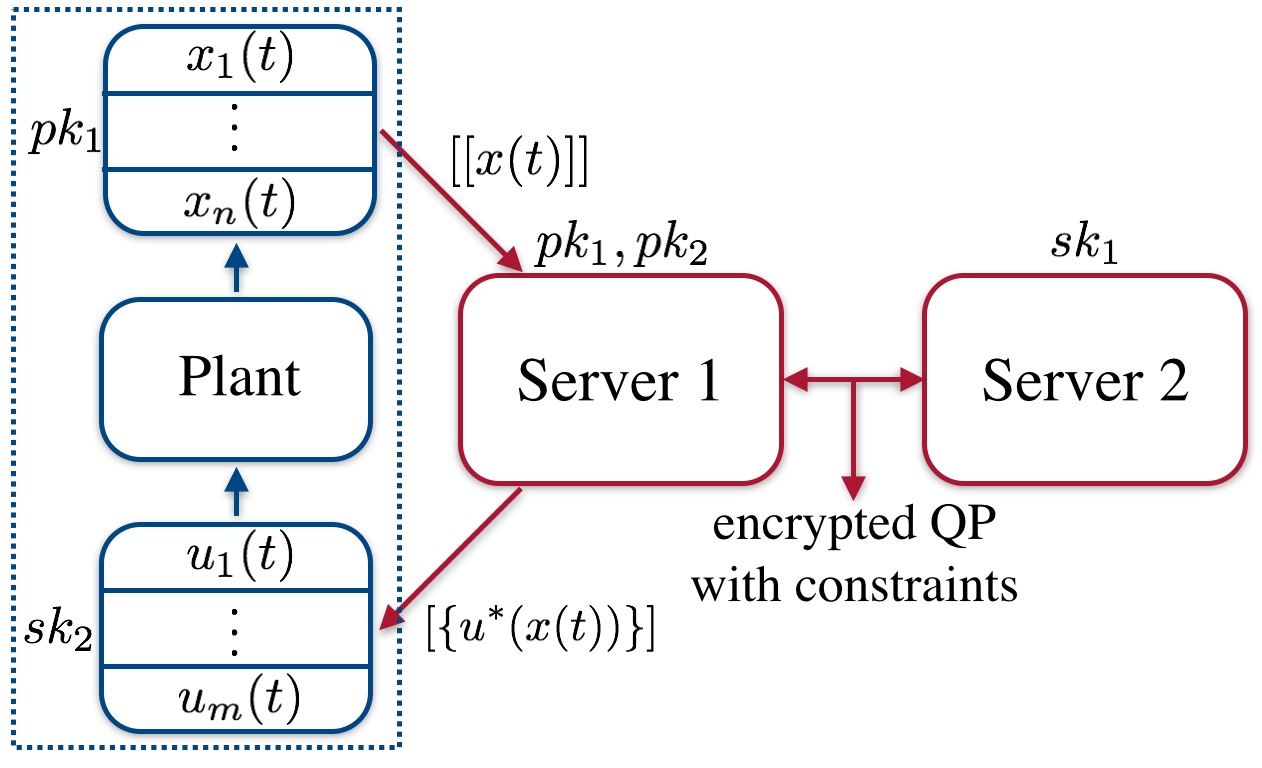}
       \vspace{-0.2cm}
   \caption{Private \mbox{two-server} MPC setup for a plant.}
   \label{fig:two-server}
\end{figure}

As before, we need an interactive protocol to achieve the projection. We use the DGK comparison protocol, proposed in~\cite{DGK07,DGK09correction}, such that, given two encrypted values of $l$ bits $[[a]],[[b]]$ to $S_1$, after the protocol, $S_2$, who has the private key, obtains a bit $(\beta = 1)\equiv(a\leq b)$, without finding anything about the inputs. 
Moreover, $S_1$ finds nothing about~$\beta$. We augment this protocol by introducing a step before the comparison in which $S_1$ randomizes the order of the two values to be compared, such that $S_2$ does not know the significance of~$\beta$ with respect to the inputs. 
Furthermore, by performing a blinded exchange, $S_1$ obtains the minimum (respectively, maximum) value of the two inputs, without any of the two servers knowing what the result is. The above procedure is performed in lines 11--16 in Protocol~\ref{pro:FGM_SS}. More details can be found in~\cite{Alexandru2018cloudQPHE}.

The comparison protocol works with inputs that are represented on $l$ bits. 
The variables we compare are results of additions and multiplications, which can increase the number of bits, thus, we need to ensure that they are represented on $l$ bits before inputting them to the comparison protocol. 
This introduces an extra step in line~10 in which $S_1$ communicates with $S_2$ in order to obtain the truncation of the comparison inputs: $S_1$ adds noise to $t_k$ and sends it to $S_2$ which decrypts it, truncates the result to $l$ bits and sends it back. $S_1$ then subtracts the truncated noise. 

In order to guarantee that $S_2$ does not find out the private values after decryption, $S_1$ adds a sufficiently large random noise to the private data. 
The random numbers in lines~13 and 19 are chosen from $(0,2^{l+\lambda_{\sigma}})\cap \mathbb Z_{N_\sigma}$, which ensures the statistical indistinguishability between the sum of the random number and the private value and a random number of equivalent length~\cite{Bost15}, where $\lambda_{\sigma}$ is the statistical security parameter.

\begin{protocol}[Encrypted projected Fast Gradient Descent in a \mbox{two-server} architecture] \label{pro:FGM_SS}
\begin{algorithmic}[1]
\small
 \Require{$C:x(t),pk_{1,2},sk_2;S_1:\bar H_f, \bar F,\bar\eta,K_c,K_w, l_i, l_f, pk_{1,2},$ $\text{cold-start},[[U_{w}]]; S_2: K_c,K_w,  l_i, l_f, pk_{1,2},sk_1, \text{cold-start}$}
 \Ensure{$C$: $u = (U_K(x(t)))_{1:m}$}
  \State $C$: Encrypt and send $[[x(t)]],[[h_u]],[[-l_u]]$ to $S_1$
  \If{cold-start}
	\State $S_1$: $[[U_0]] \leftarrow [[\mb 0_{Nm}]]$; 
$S_1,S_2$:$K \leftarrow K_c$
 	\Else
	\State $S_1$: $[[U_0]] \leftarrow \big[ [[(U_{w})_{m+1:Nm}]],[[\mb 0_m]]\big]$; $S_1,S_2$:$K \leftarrow K_w$
 \EndIf
 \State $S_1$: $[[z_0]]\leftarrow[[U_0]]$
\For{k=0\ldots,K-1}
 	\State $S_1$: $[[t_k]] \leftarrow (\mathbf I_{Nm} - \bar H_f)\otimes [[z_k]] \oplus (- \bar F_f^\intercal)\otimes [[x(t)]]$
	\State $S_1$: $[[t_k]]\leftarrow$ truncate $[[t_k]]$
	\State $S_1$: $a_k,b_k\leftarrow$ randomize $[[t_k]],[[h_u]]$
	\State $S_1,S_2$: DGK s.t. $S_2$ obtains $(\beta_k = 1)\equiv(a_k\leq b_k)$
	\State $S_1$: Pick $r_k,s_k$ and send $[[a_k]]\oplus[[r_k]],[[b_k]]\oplus[[s_k]]$ to $S_2$ 
	\State $S_2$: Send back $[[\beta_k]]$ and $[[v_k]] \leftarrow [[a_k+r_k]]\oplus[[0]]$ if $\beta_k=1$ or $[[v_k]] \leftarrow[[b_k+s_k]]\oplus[[0]]$ if $\beta_k=0$
	\State $S_1$: $[[U_{k+1}]]\leftarrow [[v_k]] \oplus s_k\otimes([[\beta_k]]\oplus[[-1]]) \oplus r_k\otimes [[\beta_k]]$ \Comment{$U_{k+1}\leftarrow\min(t_k,h_u)$}
	\State $S_1,S_2$: Redo 11--15 to get $[[U_{k+1}]]\leftarrow\max(U_{k+1},-l_u)$
	\State $S_1$: $[[z_{k+1}]] \leftarrow (1+\bar \eta)\otimes [[U_{k+1}]] \oplus (-\bar\eta) \otimes [[U_{k}]]$
 \EndFor
 \State $S_1$: Pick $\rho$ and send $[[(U_K)_{1:m}]]\oplus[[\rho]]$ to $S_2$
 \State $S_2$: Decrypt, encrypt with $pk_2$ and send to $S_1$: $[\{u+\rho\}]$
  \State $S_1$: $[\{u\}]\leftarrow [\{u+\rho\}]\oplus[\{-\rho\}]$ and send it to $C$
 \State $C$: Decrypt and output $u$
\end{algorithmic}
\end{protocol}

\begin{theorem}\label{thm:two-server}
Protocol~\ref{pro:FGM_SS} achieves privacy as in Definition~\ref{def:view}, as long as the two \mbox{semi-honest} servers do not collude.
\end{theorem}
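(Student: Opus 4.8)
The plan is to proceed exactly as for Theorem~\ref{thm:client-server} but now constructing a separate probabilistic polynomial-time simulator for the view of each of $S_1$, $S_2$ and $C$, and then invoking a hybrid argument that stitches together three ingredients: the semantic security of the Paillier cryptosystem (Definition~\ref{def:semantic}), the statistical masking guarantees of the blinding steps (in the sense of Definition~\ref{def:stat_ind}, via \cite{Bost15}), and the privacy/simulatability of the DGK comparison sub-protocol. The non-collusion hypothesis enters the argument explicitly and crucially: it is what lets us assert that $S_2$'s view never contains the masks $r_k,s_k,\rho$ chosen by $S_1$, and that $S_1$'s view never contains $sk_1$; without it the whole construction fails, since a colluding $S_2$ could un-mask every iterate. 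The client's view is the trivial case: $C$ only emits the encryptions $[[x(t)]],[[h_u]],[[-l_u]]$ formed from its own input and coins, and receives a single ciphertext $[\{u\}]$ in line~21 which it decrypts to its output $u$; a simulator on $(x(t),pk_{1,2},sk_2,u)$ reproduces these encryptions and outputs a fresh Paillier encryption of $u$ under $pk_2$, giving a view identically distributed to the real one.

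For $S_1$, every message received is a ciphertext that $S_1$ cannot decrypt: $[[x(t)]],[[h_u]],[[-l_u]]$ and the $[[\beta_k]],[[v_k]]$ of line~15 are under $pk_1$, for which $S_1$ has no secret key; $[\{u+\rho\}]$ of line~20 is under $pk_2$, likewise opaque; and by construction the DGK transcript leaks nothing about $\beta_k$ to $S_1$. The simulator replaces each received ciphertext by a fresh encryption of $0$ under the appropriate public key and runs the DGK simulator for $S_1$. A standard hybrid over the polynomially many protocol messages, each replacement justified by semantic security (Definition~\ref{def:semantic}) or by DGK privacy, shows the simulated view is computationally indistinguishable from $V_1^\Pi$, exactly as in the proof of Theorem~\ref{thm:client-server}.

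The substantive case is $S_2$, because $S_2$ holds $sk_1$ and can therefore decrypt every $pk_1$-ciphertext it is sent, including all intermediate iterates. The key point is that each such ciphertext delivered to $S_2$ carries a plaintext that $S_1$ has additively masked with noise drawn from $(0,2^{l+\lambda_\sigma})\cap\mathbb Z_{N_\sigma}$: the noised truncation input in line~10, the blinded operands $[[a_k+r_k]],[[b_k+s_k]]$ in line~13, and the blinded output $[[u+\rho]]$ in line~19. By the bound of Definition~\ref{def:stat_ind} (cf.~\cite{Bost15}), each decrypted value is statistically indistinguishable from a uniform value of the corresponding bit-length. Moreover, the bit $\beta_k$ that $S_2$ extracts from the DGK round reveals nothing about the relation between $t_k$ and $h_u$ (resp.\ between $U_{k+1}$ and $-l_u$), because $S_1$ first applies a uniform random swap to the ordered pair $(a_k,b_k)$, so $\beta_k$ is an unbiased coin flip independent of the private data. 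The simulator for $S_2$ therefore samples each received plaintext uniformly from the appropriate range, samples each $\beta_k$ as a fair coin, substitutes fresh encryptions for the corresponding ciphertexts, and runs the DGK simulator for $S_2$; assembling these through a hybrid over the polynomially many messages — statistical steps for the masked values, the DGK simulator for the comparison rounds — yields a view statistically, hence computationally, indistinguishable from the real one.

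The main obstacle is precisely this last case. One must verify that \emph{every} value ever decrypted by $S_2$ (across all $K$ iterations, including the truncation handshake and the two min/max sub-rounds per iteration) is in fact masked; that the masks are large enough relative to the $l$-bit operands for the statistical-distance estimate of Definition~\ref{def:stat_ind} to hold at the statistical security parameter $\lambda_\sigma$; and that the order randomization fully decorrelates each $\beta_k$ from the private data even though the same projection structure recurs at every step. Finally, these facts must be combined with the DGK sub-protocol's own simulator into a single polynomial-length hybrid whose cumulative distinguishing advantage remains negligible, with the non-collusion assumption invoked exactly at the points where we claim $S_2$'s view excludes $r_k,s_k,\rho$ and $S_1$'s view excludes $sk_1$.
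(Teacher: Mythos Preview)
Your proposal is correct and follows essentially the same approach as the paper: a simulator for $S_1$ via semantic security of Paillier (and DGK privacy), a simulator for $S_2$ via statistical blinding of every decrypted value plus order-randomization making each $\beta_k$ a fair coin, with the fresh-noise-per-iteration observation handling the multi-round case. The one technical point the paper makes explicit that your sketch leaves implicit is the re-randomization $\oplus[[0]]$ in line~14: without it, $S_1$ could recognize which of its own ciphertexts $[[a_k+r_k]],[[b_k+s_k]]$ was returned as $[[v_k]]$ by a direct bit-string equality check (semantic security alone does not preclude this), so that step is precisely what justifies replacing $[[v_k]]$ by a fresh encryption in your $S_1$-simulator.
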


\begin{proof}
The view of $S_1$ is composed by its inputs and exchanged messages, and no output. All the messages the first server receives are encrypted (the same holds for the subprotocol DGK). 
Furthermore, in line~14, an encryption of zero is added to the quantity $S_1$ receives such that the encryption is re-randomized and $S_1$ cannot recognize it. 
Due to the semantic security of the cryptosystems, the view of $S_1$ is computationally indistinguishable from the view of a simulator which follows the same steps as $S_1$, but replaces the incoming messages by random encryptions of corresponding length.

The view of $S_2$ is composed by its inputs and exchanged messages, and no output. Apart from the comparison bits, the latter are always blinded by noise that has at least $\lambda_{\sigma}$ bits more than the private data being sent. 
For $\lambda_{\sigma}$ chosen appropriately large (e.g.~100 bits~\cite{Bost15}), the following is true: $a+r\stackrel{s}{\equiv} r'$, where $a$ is a value of $p$ bits, $r$ is the noise chosen uniformly at random from $(0,2^{p+\lambda_{\sigma}})\cap \mathbb Z_{N_\sigma}$ and $r'$ is a value chosen uniformly at random from $(0,2^{p+\lambda_{\sigma}+1})\cap \mathbb Z_{N_\sigma}$. In the DGK subprotocol, a similar blinding is performed, see~\cite{Veugen12}.

Crucially, the noise selected by $S_1$ is different at each iteration. Hence, $S_2$ cannot extract any information by combining messages from multiple iterations, as they are always blinded by a different large enough noise. 
Moreover,~the randomization step in line~11 ensures that $S_2$ cannot infer anything from the values of~$\beta_k$, as the order of the inputs is unknown. 
Thus, we construct a simulator that follows the same steps as $S_2$, but instead of the received messages, it randomly generates values of appropriate length, corresponding to the blinded private values, and random bits corresponding to the comparison bits. 
The view of such a simulator will be computationally indistinguishable from the view of $S_2$.
\end{proof}

\begin{remark}
One can expand Protocols~\ref{pro:FGM_CS} and~\ref{pro:FGM_SS} over multiple time steps, such that $U_{0}$ is obtained from the previous iteration and not given as input, and formally prove their privacy. 
The fact that the state will converge to a neighborhood of the origin is public knowledge, and is not revealed by the execution of the protocol. A more detailed proof that explicitly constructs the simulators can be found in~\cite{Alexandru2018cloudQPHE}. 
\end{remark}

Through communication, encryption and statistical blinding, the two servers can privately compute nonlinear operations. However, this causes an increase in the computation time due to the extra encryptions and decryptions and communication rounds, as will be pointed out in Section~\ref{sec:numerical}.

\section{\mbox{Fixed-point} arithmetics MPC}\label{sec:fixed-pointMPC}
The values that are encrypted or added to or multiplied with encrypted values have to be integers. We consider \mbox{fixed-point} representations with one sign bit, $l_i$ integer bits and $l_f$ fractional bits and multiply them by $2^{l_f}$ to obtain integers. 

Working with \mbox{fixed-point} representations can lead to overflow, quantization and arithmetic \mbox{round-off} errors. 
Thus, we want to compute the deviation between the fixed-point solution and optimal solution of Algorithm~\ref{alg:FGM}. 
The bounds on this deviation can be used in an offline step prior to the online computation to choose an appropriate \mbox{fixed-point} precision for the performance of the system. 

Consider the following assumption:
\begin{assumption}\label{assum:stability}
The number of fractional bits $l_f$ and constant $c\geq 1$ are chosen large enough such that:
\begin{enumerate}
	\item[(i)] $\bar{\mc U}\subseteq \mc U$: the \mbox{fixed-point} precision solution is still feasible. 
	\item[(ii)] The eigenvalues of the \mbox{fixed-point} representation $\bar H_f$ are contained in the set $(0,~1]$, where 
	\[H_f := \bar H/ (c\bar L) ~\text{and}~\bar L := \lambda_{max}(\bar H).\]
	The constant $c$ is required in order to overcome the possibility that $\overline{(1/\bar L)\bar H}$ has the maximum eigenvalue larger than 1 due to \mbox{fixed-point} arithmetic errors.
	\item[(iii)] The \mbox{fixed-point} representation of the step size satisfies:
	\[0\leq \left(\sqrt{\kappa(\bar H)}-1\right)\Big/\left(\sqrt{\kappa(\bar H)}+1\right) \leq \bar \eta < 1.\]
\end{enumerate}
\end{assumption}

Item~(i) ensures that the feasibility of the \mbox{fixed-point} precision solution is preserved, item~(ii) ensures that the strong convexity of the \mbox{fixed-point} objective function still holds and item~(iii) ensures that the \mbox{fixed-point} step size is such that the FGM converges.
\vspace{\topsep}

\noindent\emph{Overflow errors:} 
Bounds on the infinity-norm on the \mbox{fixed-point} dynamic quantities of interest in Algorithm~\ref{alg:FGM} were derived in~\cite{Jerez14} for each iteration $k$, and depend on a bounded set $\mc X_0$ such that $x(t)\in\mc X_0$ and $\bar x(t) \in \bar{\mc X_0}$:
\begin{align*}
||\bar U_{k+1}||_\infty &\leq \max \{||\bar l_u||_\infty, ||\bar h_u||_\infty\} = R_{\bar{\mc U}}\\
||\bar z_{k+1}||_\infty &\leq (1+ 2\bar \eta) \mc R_{\bar{\mc U}}: = \zeta,\\
||\bar t_k||_\infty &\leq ||\mb{I}_{Nm} - \bar H_f||_\infty \zeta + ||\bar F_f||_\infty \mc R_{\bar{\mc X_0}},
\end{align*}
where $F_f = \bar F/(c\bar L)$ and $\mc R_{\mc S}$ represents the radius of a set $\mc S$ w.r.t. the infinity norm. 
We select from these bounds the number of integer bits $l_i$ such that there is no overflow.

\subsection{Difference between real and \mbox{fixed-point} solution}
We want to determine a bound on the error between the \mbox{fixed-point} precision solution and the real solution of the MPC problem~\eqref{eq:mpc(i)}. The total error is composed of the error induced by having \mbox{fixed-point} coefficients and variables in the optimization problem, and by the round-off errors. 
Specifically, denote by $U_K$ the solution in exact arithmetic of the MPC problem~\eqref{eq:mpc(i)} obtained after $K$ iterations of Algorithm~\ref{alg:FGM}. 
Furthermore, denote by $\tilde U_K$ the solution obtained after $K$ iterations but with $H,F,x(t),\mc U,L,\eta$ replaced by their \mbox{fixed-point} representations. 
Finally, denote by $\bar U_K$ the solution of Protocols~\ref{pro:FGM_CS} and~\ref{pro:FGM_SS} after $K$ iterations, where the iterates $[[t_k]],[[U_k]]$ have~\mbox{fixed-point} representation, i.e, truncations are performed. 
We obtain the following upper bound on the difference between the solution obtained on the encrypted data and the nominal solution of the implicit MPC problem~\eqref{eq:mpc(i)} after $K$ iterations:
\[||\bar U_K - U_K||_2 \leq ||\tilde U_K - U_K||_2+ ||\bar U_K - \tilde U_K||_2.\]

\subsubsection{Quantization errors}
We will use the following observation to investigate the quantization error bounds. Define $\epsilon_a = \bar a - a$ and $\epsilon_b = \bar b - b$. Then: 
\[\bar a\bar b - ab = \bar a\bar b - \bar a b + \bar a b - ab = \epsilon_a b + \bar a \epsilon_b.\]

Consider problem~\eqref{eq:mpc(i)} where the coefficients are replaced by the \mbox{fixed-point} representations of the matrices $H/(cL),F/(cL)$, of the vector $x(t)$ and of the set $\mc U$, but otherwise the iterates $\tilde U_k,\tilde t_k, \tilde z_k$ are real values. 
Now, consider iteration $k$ of the projected FGM. The errors induced by quantization of the coefficients between the original iterates and the approximation iterates will be:
\begin{align}\label{eq:errors_q}
\begin{split}
	\tilde t_k - t_k =& -\epsilon_{H_f}z_k + (\mb I_{Nm}-\bar H_f)\epsilon_{z,k}-\epsilon_{Fx}\\
	\xi^q_{k+1} :=&~\tilde U_{k+1} - U_{k+1} = D_k^q(\tilde t_k - t_k)	\\
	 \tilde z_{k+1} - z_{k+1} =&~ \epsilon_\eta\Delta U_k + (1+\bar \eta)\xi^q_{k+1} - \bar\eta \xi^q_{k},
\end{split}
\end{align}
where we used the notation: 
$\Delta U_k = U_{k+1}-U_{k}$; \mbox{$\epsilon_\eta = \bar \eta - \eta$}; $\epsilon_{H_f} =  \bar H_f - H/(cL)$; 
$\epsilon_{Fx} = \bar F_f^\intercal \bar x(t)-F^\intercal x(t)/(c L) = \epsilon_{F_f}^\intercal x(t) + \bar F_f \epsilon_x$; 
$\epsilon_x = \bar x(t) - x(t)$; $\epsilon_{F_f} = \bar F_f - F/(cL)$. 

The error between $\tilde U_{k+1}$ and $U_{k+1}$ is reduced from $\tilde t_k - t_k$ due to the projection on the hyperbox. Hence, to represent $\xi^q_{k+1}$ in~\eqref{eq:errors_q}, we multiply $\tilde t_k - t_k$ by the diagonal matrix $D_k^q$ that has positive elements at most one. 

We set $\xi^q_{-1}=\xi^q_{0}$. From~\eqref{eq:errors_q}, we derive a recursive iteration that characterizes the error of the primal iterate, for $k=0,\ldots,K$, which we can write as a linear system:
\begin{align*}
	\tilde A(D_k^q):=& \left[ \begin{matrix}(1+\bar \eta)D_k^q(\mb I_{Nm}  - \bar H_f) & -\bar\eta D_k^q(\mb I_{Nm}  - \bar H_f)\\ \mb I_{Nm}  & \mb 0_{Nm}\end{matrix}\right]\\
	\tilde B(D_k^q):=& \left[ \begin{matrix}-\epsilon_{H_f}D_k^q & \epsilon_\eta D_k^q(\mb I_{Nm} - \bar H_f)\\ \mb 0_{Nm}  & \mb 0_{Nm}   \end{matrix}\right] \numberthis\label{eq:err_q_sys}\\
	\left[ \begin{matrix}\xi^q_{k+1}\\ \xi^q_k\end{matrix}\right] =& \tilde A(D_k^q) \left[ \begin{matrix}\xi^q_{k}\\ \xi^q_{k-1}\end{matrix}\right] + \tilde B(D_k^q)\left[ \begin{matrix}z_k\\ \Delta U_{k-1} \end{matrix}\right] - \epsilon_{Fx}.
\end{align*}

We choose this representation in order to have a relevant error bound in Theorem~\ref{thm:quantization} that shrinks to zero as the number of fractional bits grows. 
In the following, we find an upper bound of the error using $\tilde A := \tilde A(\mb I_{Nm})$ and $\tilde B := \tilde B(\mb I_{Nm})$.

\begin{theorem}\label{thm:quantization}
Under Assumption~\ref{assum:stability}, the system defined by~\eqref{eq:err_q_sys} is bounded. Furthermore, the norm of the error between the primal iterates of the original problem and of the problem with quantized coefficients is bounded by:
\begin{align*}
\begin{split}
&||\xi^q_{k}||_2 \leq \left|\left|E\tilde A^k \right|\right|_2 \left|\left|\left[ \begin{matrix}\xi^q_{0}\\ \xi^q_{-1}\end{matrix}\right] \right|\right|_2
 + \gamma \sum_{l=0}^{k-1} \left|\left| E\tilde A^{k-1-l}\tilde B\right|\right|_2 + \\
 &+ \zeta\sum_{l=0}^{k-1} \left|\left| E\tilde A^{k-1-l}\right|\right|_2 =: \epsilon_1;~~
 \gamma = (3+2\bar\eta)\sqrt{Nm}\mc R_{\bar{\mc U}},\\
 &\zeta = ||\epsilon_{F_f}||_2 \mc R^2_{\mc X_0} + 2^{-l_f}\sqrt{n}||\bar F_f||_2,
\end{split}
\end{align*}
where $E = \left[\mb I_{Nm}~~ \mb0_{Nm}\right]$, $\mc R^2_{\mc X_0} $ is the radius of the compact set $\mc X_0$ w.r.t. the 2-norm and $\mc R_{\bar{\mc U}}=\max \{||l_u||_\infty, ||h_u||_\infty\}$.
\end{theorem}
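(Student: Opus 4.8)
The plan is to read~\eqref{eq:err_q_sys} as a discrete-time linear time-varying system $\chi_{k+1}=\tilde A(D^q_k)\chi_k+w_k$ with lifted state $\chi_k:=[(\xi^q_k)^\intercal\ (\xi^q_{k-1})^\intercal]^\intercal$ and disturbance $w_k:=\tilde B(D^q_k)[z_k^\intercal\ \Delta U_{k-1}^\intercal]^\intercal-[\epsilon_{Fx}^\intercal\ \mathbf 0^\intercal]^\intercal$, to unroll it through its state-transition matrix, and to bound every term after replacing each projection gain $D^q_k$ by $\mathbf I_{Nm}$. That replacement is conservative but legitimate: the Euclidean projection on the hyperbox $\bar{\mathcal U}$ is firmly non-expansive, so $\xi^q_{k+1}=D^q_k(\tilde t_k-t_k)$ with $D^q_k$ diagonal and $0\preceq D^q_k\preceq \mathbf I_{Nm}$, and, writing $E=[\mathbf I_{Nm}\ \mathbf 0_{Nm}]$, one has $\tilde A(D^q_k)=\mathrm{diag}(D^q_k,\mathbf I_{Nm})\tilde A$, $\tilde B(D^q_k)=\mathrm{diag}(D^q_k,\mathbf I_{Nm})\tilde B$ with $\|\mathrm{diag}(D^q_k,\mathbf I_{Nm})\|_2\le1$ and $E\,\mathrm{diag}(D^q_k,\mathbf I_{Nm})=D^q_k E$.

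First I would show $\tilde A=\tilde A(\mathbf I_{Nm})$ is Schur. Since $\bar H_f$ is symmetric, an orthogonal change of coordinates block-diagonalizes $\tilde A$, and its characteristic polynomial factors as $\prod_{\mu}\bigl(\lambda^2-(1+\bar\eta)(1-\mu)\lambda+\bar\eta(1-\mu)\bigr)$ over the eigenvalues $\mu\in(0,1]$ of $\bar H_f$ (Assumption~\ref{assum:stability}(ii)); the Jury criterion, using $0\le\bar\eta<1$ (Assumption~\ref{assum:stability}(iii)), puts both roots of each quadratic strictly inside the unit disk — this is precisely the classical linear-rate statement for the fast gradient method on a strongly convex quadratic. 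Hence $\|E\tilde A^j\|_2$ decays geometrically, the series $\sum_j\|E\tilde A^j\tilde B\|_2$ and $\sum_j\|E\tilde A^j\|_2$ converge, and, since each $\tilde A(D^q_k)$ is $\tilde A$ pre-multiplied by a contraction, the time-varying loop inherits this decay; thus $\chi_k$ stays uniformly bounded, which is the first claim.

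Next I would unroll: with $\Phi_{k,l}:=\tilde A(D^q_{k-1})\cdots\tilde A(D^q_l)$ (and $\Phi_{k,k}:=\mathbf I$),
\[ \xi^q_k=E\,\Phi_{k,0}\,\chi_0+\sum_{l=0}^{k-1}E\,\Phi_{k,l+1}\!\left(\tilde B(D^q_l)\big[z_l^\intercal\ \Delta U_{l-1}^\intercal\big]^\intercal-\big[\epsilon_{Fx}^\intercal\ \mathbf 0^\intercal\big]^\intercal\right). \]
Taking norms with the triangle inequality and submultiplicativity, I would bound the two drive signals from the overflow estimates already derived: $\|z_l\|_\infty\le(1+2\bar\eta)\mathcal R_{\bar{\mathcal U}}$ and $\|\Delta U_{l-1}\|_\infty\le2\mathcal R_{\bar{\mathcal U}}$ give $\|[z_l^\intercal\ \Delta U_{l-1}^\intercal]^\intercal\|_2\le\sqrt{Nm}\,\mathcal R_{\bar{\mathcal U}}\sqrt{(1+2\bar\eta)^2+4}\le(3+2\bar\eta)\sqrt{Nm}\,\mathcal R_{\bar{\mathcal U}}=\gamma$, while $\epsilon_{Fx}=\epsilon_{F_f}^\intercal x(t)+\bar F_f\epsilon_x$ with $x(t)\in\mathcal X_0$ and $\|\epsilon_x\|_\infty\le2^{-l_f}$ gives $\|\epsilon_{Fx}\|_2\le\|\epsilon_{F_f}\|_2\mathcal R^2_{\mathcal X_0}+2^{-l_f}\sqrt n\,\|\bar F_f\|_2=\zeta$. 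Finally, replacing each $D^q_k$ by $\mathbf I_{Nm}$ inside the transition maps turns $\|E\Phi_{k,0}\|_2\mapsto\|E\tilde A^k\|_2$, $\|E\Phi_{k,l+1}\tilde B(D^q_l)\|_2\mapsto\|E\tilde A^{k-1-l}\tilde B\|_2$ and $\|E\Phi_{k,l+1}\|_2\mapsto\|E\tilde A^{k-1-l}\|_2$, and assembling the three pieces yields exactly $\epsilon_1$.

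The main obstacle is making that last replacement fully rigorous inside the product $\Phi_{k,l}$: factoring the leftmost $D^q_{k-1}$ out through $E$ is immediate, but a gain sandwiched between two copies of $\tilde A$ cannot be dropped by plain submultiplicativity, since $\|MDN\|_2\le\|M\|_2\|N\|_2$ does not imply $\|MDN\|_2\le\|MN\|_2$. The honest route is to push, block by block along the companion recursion, the elementary inequality $\|[AD,\ B]\|_2\le\|[A,\ B]\|_2$ (valid for diagonal $0\preceq D\preceq\mathbf I$) through the $2\times2$ block structure of $\tilde A$, or, more crudely, to take the worst case $D^q_k=\mathbf I_{Nm}$ (no active constraint) and invoke the known contraction of the projected FGM; the detailed bookkeeping, together with an explicit boundedness certificate, is carried out in~\cite{Alexandru2018cloudQPHE}. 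The remaining ingredients — Schur-ness of $\tilde A$, the two input-norm bounds, and the unrolling — are routine.
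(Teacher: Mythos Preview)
Your proof follows essentially the same route as the paper's: unroll the recursion~\eqref{eq:err_q_sys}, invoke Schur stability of $\tilde A$ (the paper cites Lemma~1 of~\cite{Jerez14} rather than arguing via the Jury criterion), bound the drive signals to obtain $\gamma$ and $\zeta$, and replace each $D^q_k$ by $\mathbf I_{Nm}$. You are in fact more careful than the paper about that last replacement---the paper simply writes the unrolled expression with $\tilde A$ in place of $\tilde A(D^q_k)$ and invokes $\|\tilde A(D^q_k)\|_2\le\|\tilde A\|_2$ without addressing the product issue you raise, so the obstacle you flag is present (and unaddressed) in the original proof as well.
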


\begin{proof}
	The inner stability of the system is given by the fact that $\tilde A$ has spectral radius $\rho(\tilde A)<1$ which is proven in Lemma~1 in~\cite{Jerez14}. The same holds for $\tilde A(D_k^q)$. Since we want to give a bound of the error in terms of computable values, we use the fact that $||\tilde A(D_k^q)||_2\leq ||\tilde A||_2$ (resp., $||\tilde B(D_k^q)||_2\leq ||\tilde B||_2$) and express the bounds in terms of the latter.
	
	From~\eqref{eq:err_q_sys}, one can obtain the following expression for the errors at time $k$ and $k-1$, for $k=0,\ldots,K-1$:
\begin{align*}
	\left[ \begin{matrix}\xi^q_{k}\\ \xi^q_{k-1}\end{matrix}\right] &\leq \tilde A^{k} \left[ \begin{matrix}\xi^q_{0}\\ \xi^r_{-1}\end{matrix}\right] + 
	\sum_{l=0}^{k-1}\tilde A^{k-1-l}\left(\tilde B  \left[ \begin{matrix}z_l\\ \Delta U_{l-1}\end{matrix}\right] - \epsilon_{Fx} \right),
\end{align*}
and the first term goes to zero as $k\rightarrow \infty$. 
We multiply this by $E = \left[\mb I_{Nm}~\mb0_{Nm}\right]$ to obtain the expression of $||\xi^q_{k}||$.

	Subsequently, for any $0\leq k\leq K-1$:
\begin{equation*}
\begin{aligned}
&\left | \left | \left[ z_k^\intercal ~~ \Delta U_{k-1}^\intercal\right]^\intercal \right| \right|_2 = ||z_k ||_2 + ||\Delta U_{k-1}||_2,\\
&\leq ||U_{k} + \bar\eta \Delta U_{k-1}||_2 + ||\Delta U_k||_2 \leq \mc R_{\bar{\mc U}} + 2(1+\bar\eta)\mc R_{\bar{\mc U}}\\
& \leq (3+2\bar\eta)\sqrt{Nm (\max\limits_i \{l_u^i,h_u^i\})^2} :=\gamma\\
&||\epsilon_{Fx}||_2 \leq  ||\epsilon_{F_f}||_2 || x(t)||_2 + ||\bar F_f||_2 || \epsilon_x||_2\\
&\leq  ||\epsilon_{F_f}||_2 \mc R^2_{\mc X_0} + 2^{-l_f}\sqrt{n}||\bar F_f||_2 :=\zeta. 
\end{aligned}
\end{equation*}
\end{proof}

One can eliminate the initial error $\xi^q_0$ and its effect by choosing in both exact and \mbox{fixed-point} coefficient-FGM algorithms the initial iterate to be represented on $l_f$ fractional bits. Therefore, only the persistent noise counts. 

\begin{remark}
In primal-dual algorithms, the maximum values of the dual variables corresponding to the complicating constraints cannot be bounded a priori, i.e., we cannot give overflow or coefficient quantization error bounds. This justifies our focus on a problem with only simple input constraints. 
The work in~\cite{Patrinos13} considers the bound on the dual variables as a parameter that can be tuned by the user. 
\end{remark}
\vspace{\topsep}

\subsubsection{Arithmetic round-off errors}
Let us now investigate the error between the solution of the previous problem and the solution of the \mbox{fixed-point} FGM corresponding to Protocols~\ref{pro:FGM_CS} and~\ref{pro:FGM_SS}. 
The encrypted values do not necessarily maintain the same number of bits after operations, so we will consider round-off errors where we perform truncations. This happens in line~10 in both protocols. 
In this case, we obtain similar results to~\cite{Jerez14}, where the quantization errors were not analyzed, i.e., as if the nominal coefficients of the problem were represented with $l_f$ fractional bits from the problem formulation. Consider iteration $k$ of the projected FGM. 
The errors due to \mbox{round-off} between the primal iterates of the two solutions will be:
\begin{align*}
	\bar t_k - \tilde t_k =& (\mb I_{Nm} - \bar H_f)(\bar z_k - \tilde z_k) + \epsilon'_{t,k}\\
	\xi^r_{k+1} :=& \bar U_{k+1} - \tilde U_{k+1} = D_k^r(\bar t_k - t_k)      \numberthis \label{eq:errors_r}\\
	\bar z_{k+1} - \tilde z_{k+1} =& (1+\bar \eta)\xi^r_{k+1} - \bar\eta \xi^r_{k}.
\end{align*}
Again, the projection on the hyperbox reduces the error, so $D_k^r$ is a diagonal matrix with positive elements less than one. For Protocol~\ref{pro:FGM_CS}, the \mbox{round-off} error due to truncation is \mbox{$(\epsilon'_{t,k})^i \in [-Nm2^{-l_f},~0]$}, $i=1,\ldots,Nm$. 
The encrypted truncation step in Protocol~\ref{pro:FGM_SS} introduces an extra term due to blinding, making \mbox{$(\epsilon'_{t,k})^i\in[-(1+Nm)2^{-l_f},~2^{-l_f}]$}.

We set set $\xi^r_{-1}=\xi^r_{0}$. From~\eqref{eq:errors_r}, we can derive a recursive iteration that characterizes the error of the primal iterate, which we can write as a linear system, with $\tilde A(\cdot)$ as before:
\begin{align}\label{eq:err_r_sys}
\begin{split}
	\left[ \begin{matrix}\xi^r_{k+1}\\ \xi^r_k\end{matrix}\right] =& \tilde A(D_k^r) \left[ \begin{matrix}\xi^r_{k}\\ \xi^r_{k-1}\end{matrix}\right] + D_k^r \epsilon'_{t,k}.
\end{split}
\end{align}

\begin{theorem}\label{thm:round-off}
Under Assumption~\ref{assum:stability}, the system defined by~\eqref{eq:err_r_sys} is bounded. Furthermore, the norm of the error of the primal iterate is bounded by:
\begin{align*}
||\xi^r_k||_2 \leq& \left|\left|E\tilde A^k \right|\right|_2 \left|\left|\left[ \begin{matrix}\xi^r_{0}\\ \xi^r_{-1}\end{matrix}\right] \right|\right|_2 + \gamma'\sum_{l=0}^{k-1} \left|\left| E\tilde A^{k-1-l}\right|\right|_2 =: \epsilon_2,\\
\gamma_{CS}'=& 2^{-l_f}(Nm)^\frac{3}{2}; \gamma_{SS}'=2^{-l_f}\sqrt{Nm}(1+Nm).
\end{align*}
\end{theorem}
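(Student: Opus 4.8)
The plan is to derive Theorem~\ref{thm:round-off} as a near-immediate consequence of the machinery already built for Theorem~\ref{thm:quantization}, since the error recursion~\eqref{eq:err_r_sys} has exactly the same transition matrix $\tilde A(D_k^r)$ as the quantization recursion~\eqref{eq:err_q_sys}, with $D_k^r$ (diagonal, entries in $(0,1]$) playing the role of $D_k^q$. First I would invoke Lemma~1 of~\cite{Jerez14}, which yields $\rho(\tilde A(D_k^r))<1$ and $\rho(\tilde A)<1$, together with the contraction property $\|\tilde A(D_k^r)\|_2\le\|\tilde A\|_2$. These facts show that the homogeneous part of~\eqref{eq:err_r_sys} decays to zero and that the recursion is bounded-input bounded-state, so boundedness of $\xi^r_k$ reduces to boundedness of the forcing term $D_k^r\epsilon'_{t,k}$.

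Next I would bound that forcing term. Since $D_k^r$ is diagonal with entries in $(0,1]$ and $\epsilon'_{t,k}\in\mathbb R^{Nm}$, we have $\|D_k^r\epsilon'_{t,k}\|_2\le\|\epsilon'_{t,k}\|_2\le\sqrt{Nm}\,\|\epsilon'_{t,k}\|_\infty$. Plugging in the componentwise bounds stated just before the theorem: for Protocol~\ref{pro:FGM_CS}, $(\epsilon'_{t,k})^i\in[-Nm2^{-l_f},0]$, so $\|\epsilon'_{t,k}\|_2\le\sqrt{Nm}\,(Nm)2^{-l_f}=\gamma_{CS}'$; for Protocol~\ref{pro:FGM_SS}, the additional blinding contribution gives $(\epsilon'_{t,k})^i\in[-(1+Nm)2^{-l_f},2^{-l_f}]$, so $\|\epsilon'_{t,k}\|_2\le\sqrt{Nm}\,(1+Nm)2^{-l_f}=\gamma_{SS}'$.

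Finally I would unroll~\eqref{eq:err_r_sys} from the initial condition,
\[\left[ \begin{matrix}\xi^r_{k}\\ \xi^r_{k-1}\end{matrix}\right] = \Big(\prod_{j=0}^{k-1}\tilde A(D_j^r)\Big)\left[ \begin{matrix}\xi^r_{0}\\ \xi^r_{-1}\end{matrix}\right] + \sum_{l=0}^{k-1}\Big(\prod_{j=l+1}^{k-1}\tilde A(D_j^r)\Big)D_l^r\epsilon'_{t,l},\]
multiply on the left by $E=[\mb I_{Nm}~\mb 0_{Nm}]$ to extract $\xi^r_k$, take $\|\cdot\|_2$, and apply the triangle inequality, submultiplicativity, $\|\tilde A(D_j^r)\|_2\le\|\tilde A\|_2$ and $\|\epsilon'_{t,l}\|_2\le\gamma'$; this reproduces the claimed bound $\epsilon_2$. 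As noted after Theorem~\ref{thm:quantization}, choosing the initial iterate on $l_f$ fractional bits eliminates $\xi^r_0$ and leaves only the persistent term.

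I do not expect a substantive obstacle: the only genuinely new step is the elementary passage from the componentwise truncation/blinding bounds to the $\ell_2$ constant $\gamma'$ and keeping the $CS$ versus $SS$ cases straight. The one point that needs the same care as in Theorem~\ref{thm:quantization} is replacing the data-dependent products $\prod\tilde A(D_j^r)$ by the offline-computable powers of $\tilde A$ in the final estimate, which is legitimate precisely because each $D_j^r$ only shrinks coordinates and hence $\|\tilde A(D_j^r)\|_2\le\|\tilde A\|_2$ --- exactly the route used in~\cite{Jerez14}.
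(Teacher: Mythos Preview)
Your proposal is correct and follows exactly the route the paper intends: the paper's own ``proof'' is the single sentence ``The proof is straightforward,'' deferring entirely to the machinery already developed for Theorem~\ref{thm:quantization}, and you have filled in precisely those details (stability via Lemma~1 of~\cite{Jerez14}, unrolling~\eqref{eq:err_r_sys}, replacing $\tilde A(D_j^r)$ by $\tilde A$ via $\|\tilde A(D_j^r)\|_2\le\|\tilde A\|_2$, and the elementary $\ell_\infty\to\ell_2$ passage for the truncation/blinding bounds to obtain $\gamma'_{CS}$ and $\gamma'_{SS}$).
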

\vspace{\topsep}

The proof is straightforward. 

As before, one can eliminate the initial error $\xi^r_0$ and its effect by choosing the same initial iterate represented on $l_f$ fractional bits for both problems. 

\begin{remark}
As $l_f\rightarrow \infty$, $\epsilon_1\rightarrow 0$ and $\epsilon_2\rightarrow 0$. The persistent noise in~\eqref{eq:err_q_sys} and~\eqref{eq:err_r_sys}, which is composed by quantization errors and round-off errors, becomes zero when the number of fractional bits mimics a real variable. This makes systems~\eqref{eq:err_q_sys} and~\eqref{eq:err_r_sys} input-to-state stable.
\end{remark}

\subsection{Choice of error level}
For every instance of problem~\eqref{eq:mpc}, the error bound $\epsilon:=\epsilon_1+\epsilon_2$ can be computed as a function of the number of fractional bits~$l_f$. 
We can incorporate these errors as a bounded disturbance $d(\cdot)$ in system~\eqref{eq:system}: $d(t) = B\xi(t)$, where $\xi(t) = u^\ast(t) - \bar u(t)$. 
Then, we can design the terminal cost as described in~\cite{Pannocchia11}, such that the controller achieves inherent robust stability to process perturbations $||d(t)||_2\leq \delta$, and choose: $\epsilon\leq \frac{\delta}{||B||}$. 
Alternatively, we can incorporate the error in the suboptimality of the cost and assume that we obtain a cost \mbox{$\bar J_{N}(x(t),\bar U_K) \leq J_N^\ast(x(t)) + \epsilon'$}, and compute $\epsilon$ such that asymptotic stability is achieved as in~\cite{McGovern99}.

In the offline phase, the \mbox{fixed-point} precision of the variables (the number of integer bits $l_i$ and the number of fractional bits $l_f$) is chosen such that there is no overflow and one of the conditions on $\epsilon$ is satisfied. Note that these conditions can be overly-conservative. This ensures that the MPC performance is guaranteed with a large margin, but the computation is slower because of the large number of bits.

\begin{remark}\label{rem:GM}
Instead of the FGM, we can use the simple gradient descent method, where fewer errors accumulate, and less memory is needed, but more iterations are necessary to reach to the optimal solution.
\end{remark}

Once $l_i$ and $l_f$ have been chosen, we can pick $N_\sigma$ such that the there is no overflow in Protocol~\ref{pro:FGM_CS} and Protocol~\ref{pro:FGM_SS}, respectively. 
In Protocol~\ref{pro:FGM_CS}, truncation cannot be done on encrypted data by the server, thus, the multiplications by $2^{l_f}$ are accumulated between $z_k$ and $t_k$, which means that $t_k$ is multiplied by $2^{3l_f}$. Hence, we choose $N_\sigma$ such that $l_i+3l_f + 2 < \log_2 (N_\sigma/3)$ holds. 
For Protocol~\ref{pro:FGM_SS} we pick $N_\sigma$ such that $l_i+3l_f + 3 + \lambda_{\sigma} < \log_2 (N_\sigma/3)$ holds, where $\lambda_{\sigma}$ is the statistical security parameter. 
If the $N_\sigma$ that satisfies these requirements is too large, one should use the simple gradient method, which will have multiplications up to $2^{2l_f}$. 


\section{Numerical results and trade-off}~\label{sec:numerical}
The number of bits in the representation is crucial for the performance of the MPC scheme. At the same time, a more accurate representation slows down the private MPC computation. 

In Figure~\ref{fig:comparison}, for a toy model of a spacecraft from~\cite{MPCbenchmarks} and based on~\cite{Hegrenaes05}, we compare the predicted theoretical error bounds from Theorems~\ref{thm:quantization} and \ref{thm:round-off} and the norm of the actual errors between the control input obtained with the \mbox{client-server} protocol and the control input of the unencrypted MPC. 
The simulation is run for a time step of MPC with 18 iterations, with three choices of the number of fractional bits: 16, 24 and 32 bits. The results are similar for the \mbox{two-server} protocol. 
We can observe that the predicted errors are around two orders of magnitude larger than the real errors caused by the encryption, and even for 16 bits of precision, the actual error is small.

\begin{figure}[h]
  \centering
    \includegraphics[width=0.45\textwidth]{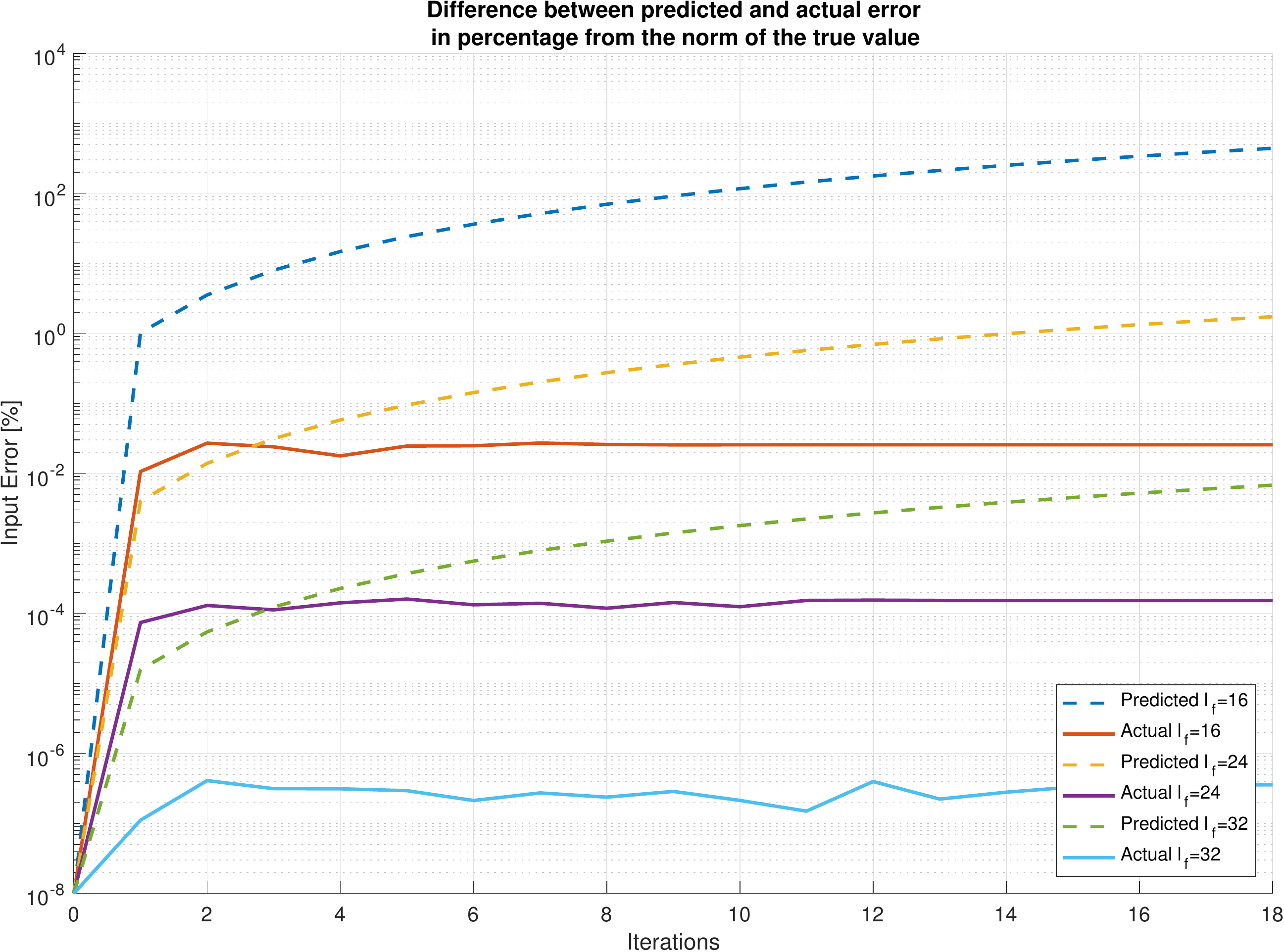}
   \vspace{-0.2cm}
  \caption{Predicted and actual errors for the control input computed in Protocol~\ref{pro:FGM_CS} for the Problem~\eqref{eq:mpc(i)}. The y-axis shows the errors (predicted and actual) as a percentage of the norm of the true iterate $U_k$.}   
   \label{fig:comparison}
\end{figure}

Furthermore, we implemented the above two protocols for various problem sizes. Table~\ref{tab:results} shows the computation times for Protocols~\ref{pro:FGM_CS} and~\ref{pro:FGM_SS} for $l_i = 16$ integer bits and $l_f\in\{16,32\}$ fractional bits. We fix the number of iterations to 50. 
The times obtained vary linearly with the number of iterations, so one can approximate how long it would take to run a different number of iterations. The size of the $N_\sigma$ for the encryption is chosen to be $512$ bits. The computations have been effectuated on a 2.2 GHz Intel Core~i7.

\begin{table}[h]
\small
\begin{center}
 \begin{tabular}{ c | c | c | c | c | c } 
 \hline
$l_f$ & $\begin{matrix}n = 2\\m=2\end{matrix}$ & $\begin{matrix}n = 5\\m=5\end{matrix}$ & $\begin{matrix}n = 10\\m=10\end{matrix}$ & $\begin{matrix}n = 20\\m=20\end{matrix}$ &$\begin{matrix}n = 50\\m=30\end{matrix}$ \\ [0.5ex] 
 \hline\hline
CS 16 & 1.21 & 4.08 & 13.38 & 39.56 & 84.28\\ 
 \hline
CS 32 &  1.33 & 5.20 & 15.46 & 53.48 & 105.84 \\
 \hline
SS 16 &  23.27 & 59.81 & 123.19 & 261.75 & 457.87\\ 
 \hline
SS  32 & 31.21 & 91.74 & 170.62 & 372.42 & 579.38 \\
 \hline
\end{tabular}
\caption{Average execution time in seconds for Protocols~\ref{pro:FGM_CS},~\ref{pro:FGM_SS}.}
\vspace{-0.4cm}
\label{tab:results}
\end{center}
\vspace{-0.4cm}
\end{table}

The larger the number of fractional bits, the larger is the execution time, because the computations involve larger numbers and more bits have to be sent from one party to another in the case of encrypted comparisons. 
Communication is the reason for the significant \mbox{slow-down} observed in the \mbox{two-server} architecture compared to the \mbox{client-server} architecture. 
Performing the projection with PHE requires $l$ communication rounds, where $l$ is the number of bits of the messages compared. 
Privately updating the iterates requires another communication round. 
However, the assumption is that the servers are powerful computers, and the execution time can be greatly improved using parallelization and more efficient data structures.

To summarize, privacy comes at the price of complexity: hiding the private data requires working with large encrypted numbers, and the computations on private data require communication. 
Furthermore, the more parties that need to be oblivious to the private data (two servers in the second architecture compared to one server in the first architecture), the more complex the private protocols become.


\section{Conclusion}
In this paper, we have presented two methods to achieve the private computation of the solution to cloud-outsourced implicit MPC, through additively homomorphic encryption. 
The client, which is the owner of a linear-time discrete system with input constraints, desires to keep the measurements, the constraints and the control inputs private. 
The cloud, which can be composed by one or two servers, knows the parameters of the system and has to relieve the computation, i.e., solving the optimization problems, from the client in a private manner. 
The first method proposes an architecture where the computation is split between the client and one server. The client encrypts the current state with a PHE scheme, and performs the nonlinear operations (the projection on the constraints), while the server performs the rest of the computations needed to solve the optimization problem via the fast gradient method. 
Secondly, we propose a \mbox{two-server} architecture, in which the client is exempt from any computations apart from encrypting the state and decrypting the input. The two servers engage in a protocol to solve the optimization problem and use communication to perform the nonlinear operations on encrypted data. We prove that both protocols are private for \mbox{semi-honest} servers.

In order to use encryptions, we map the real values to rational \mbox{fixed-point} values. We analyze the errors introduced by the associated quantization and \mbox{round-off} errors and give upper bounds which can be used to choose an appropriate precision that corresponds to performance requirements on the MPC. Furthermore, we implement the two proposed protocols and perform numerical tests.

The two architectures proposed in this paper offer insights for a more common architecture met in practice of multiple clients and one server, which we will consider in future work.


\section{Acknowledgements}
This work was supported in part by ONR N00014-17-1-2012, and by NSF CNS-1505799 grant and the Intel-NSF Partnership for Cyber-Physical Systems Security and Privacy.


\small

\bibliographystyle{IEEEtran}
\bibliography{IEEEabrv,biblo}

\begin{thebibliography}{10}
\providecommand{\url}[1]{#1}
\csname url@samestyle\endcsname
\providecommand{\newblock}{\relax}
\providecommand{\bibinfo}[2]{#2}
\providecommand{\BIBentrySTDinterwordspacing}{\spaceskip=0pt\relax}
\providecommand{\BIBentryALTinterwordstretchfactor}{4}
\providecommand{\BIBentryALTinterwordspacing}{\spaceskip=\fontdimen2\font plus
\BIBentryALTinterwordstretchfactor\fontdimen3\font minus
  \fontdimen4\font\relax}
\providecommand{\BIBforeignlanguage}[2]{{%
\expandafter\ifx\csname l@#1\endcsname\relax
\typeout{** WARNING: IEEEtran.bst: No hyphenation pattern has been}%
\typeout{** loaded for the language `#1'. Using the pattern for}%
\typeout{** the default language instead.}%
\else
\language=\csname l@#1\endcsname
\fi
#2}}
\providecommand{\BIBdecl}{\relax}
\BIBdecl

\bibitem{Mayne14model}
D.~Q. Mayne, ``Model predictive control: Recent developments and future
  promise,'' \emph{Automatica}, vol.~50, no.~12, pp. 2967--2986, 2014.

\bibitem{vanDijk10}
M.~Van~Dijk and A.~Juels, ``On the impossibility of cryptography alone for
  privacy-preserving cloud computing.'' \emph{HotSec}, vol.~10, pp. 1--8, 2010.

\bibitem{GentryPhD}
C.~Gentry, ``A fully homomorphic encryption scheme,'' Ph.D. dissertation,
  Department of Computer Science, Stanford University, 2009.

\bibitem{Brakerski11}
Z.~Brakerski and V.~Vaikuntanathan, ``Fully homomorphic encryption from
  ring-lwe and security for key dependent messages,'' in \emph{Annual
  International Conference on the Theory and Applications of Cryptographic
  Techniques}.\hskip 1em plus 0.5em minus 0.4em\relax Springer, 2011, pp.
  505--524.

\bibitem{Poppelmann15}
T.~P{\"o}ppelmann, M.~Naehrig, A.~Putnam, and A.~Macias, ``Accelerating
  homomorphic evaluation on reconfigurable hardware,'' in \emph{International
  Workshop on Cryptographic Hardware and Embedded Systems}.\hskip 1em plus
  0.5em minus 0.4em\relax Springer, 2015, pp. 143--163.

\bibitem{Boneh11functional}
D.~Boneh, A.~Sahai, and B.~Waters, ``Functional encryption: Definitions and
  challenges,'' in \emph{Theory of Cryptography Conference}.\hskip 1em plus
  0.5em minus 0.4em\relax Springer, 2011, pp. 253--273.

\bibitem{Paillier99}
P.~Paillier, ``Public-key cryptosystems based on composite degree~residuosity
  classes,'' in \emph{International Conference on the Theory and Applications
  of Cryptographic Techniques}.\hskip 1em plus 0.5em minus 0.4em\relax
  Springer, 1999, pp. \mbox{223--238}.

\bibitem{Dwork08}
C.~Dwork, ``Differential privacy: A survey of results,'' in \emph{International
  Conference on Theory and Applications of Models of Computation}.\hskip 1em
  plus 0.5em minus 0.4em\relax Springer, 2008, pp. 1--19.

\bibitem{Shamir79}
A.~Shamir, ``How to share a secret,'' \emph{Communications of the ACM},
  vol.~22, no.~11, pp. 612--613, 1979.

\bibitem{Algesheimer02}
J.~Algesheimer, J.~Camenisch, and V.~Shoup, ``Efficient computation modulo a
  shared secret with application to the generation of shared safe-prime
  products,'' \emph{Advances in Cryptology}, pp. 417--432, 2002.

\bibitem{Yao82}
A.~C. Yao, ``Protocols for secure computations,'' in \emph{23rd Symposium on
  Foundations of Computer Science}.\hskip 1em plus 0.5em minus 0.4em\relax
  IEEE, 1982, pp. 160--164.

\bibitem{Bellare12}
M.~Bellare, V.~T. Hoang, and P.~Rogaway, ``Foundations of garbled circuits,''
  in \emph{Conference on Computer and Communications Security}.\hskip 1em plus
  0.5em minus 0.4em\relax ACM, 2012, pp. 784--796.

\bibitem{Goldreich87}
O.~Goldreich, S.~Micali, and A.~Wigderson, ``How to play any mental game,'' in
  \emph{19th Annual ACM Symposium on Theory of Computing}, 1987, pp. 218--229.

\bibitem{Erkin12}
Z.~Erkin, T.~Veugen, T.~Toft, and R.~L. Lagendijk, ``Generating private
  recommendations efficiently using homomorphic encryption and data packing,''
  \emph{IEEE Transactions on Information Forensics and Security}, vol.~7,
  no.~3, pp. 1053--1066, 2012.

\bibitem{leNy14differentially}
J.~Le~Ny and G.~J. Pappas, ``Differentially private filtering,'' \emph{IEEE
  Transactions on Automatic Control}, vol.~59, no.~2, pp. 341--354, 2014.

\bibitem{Koufogiannis17}
F.~Koufogiannis and G.~J. Pappas, ``Differential privacy for dynamical
  sensitive data,'' in \emph{56th Annual Conference on Decision and
  Control}.\hskip 1em plus 0.5em minus 0.4em\relax IEEE, 2017, pp. 1118--1125.

\bibitem{Wang2017differential}
Y.~Wang, Z.~Huang, S.~Mitra, and G.~E. Dullerud, ``Differential privacy in
  linear distributed control systems: Entropy minimizing mechanisms and
  performance tradeoffs,'' \emph{IEEE Transactions on Control of Network
  Systems}, vol.~4, no.~1, pp. 118--130, 2017.

\bibitem{Zellner17}
M.~Zellner, T.~T. De~Rubira, G.~Hug, and M.~Zeilinger, ``Distributed
  differentially private model predictive control for energy storage,''
  \emph{IFAC-PapersOnLine}, vol.~50, no.~1, pp. 12\,464--12\,470, 2017.

\bibitem{Kogiso15}
K.~Kogiso and T.~Fujita, ``Cyber-security enhancement of networked control
  systems using homomorphic encryption,'' in \emph{54th Annual Conference on
  Decision and Control}.\hskip 1em plus 0.5em minus 0.4em\relax IEEE, 2015, pp.
  6836--6843.

\bibitem{Farokhi17}
F.~Farokhi, I.~Shames, and N.~Batterham, ``Secure and private control using
  semi-homomorphic encryption,'' \emph{Control Engineering Practice}, vol.~67,
  pp. 13--20, 2017.

\bibitem{Kim16encrypting}
J.~Kim, C.~Lee, H.~Shim, J.~H. Cheon, A.~Kim, M.~Kim, and Y.~Song, ``Encrypting
  controller using fully homomorphic encryption for security of cyber-physical
  systems,'' \emph{IFAC-PapersOnLine}, vol.~49, no.~22, pp. 175--180, 2016.

\bibitem{Xu17secure}
Z.~Xu and Q.~Zhu, ``A secure data assimilation for large-scale sensor networks
  using an untrusted cloud,'' \emph{IFAC-PapersOnLine}, vol.~50, no.~1, pp.
  2609--2614, 2017.

\bibitem{Nozari16}
E.~Nozari, P.~Tallapragada, and J.~Cortes, ``Differentially private distributed
  convex optimization via functional perturbation,'' \emph{IEEE Transactions on
  Control of Network Systems}, 2016.

\bibitem{Han17}
S.~Han, U.~Topcu, and G.~J. Pappas, ``Differentially private distributed
  constrained optimization,'' \emph{IEEE Transactions on Automatic Control},
  vol.~62, no.~1, pp. 50--64, 2017.

\bibitem{Shoukry16}
Y.~Shoukry, K.~Gatsis, A.~Alanwar, G.~J. Pappas, S.~A. Seshia, M.~Srivastava,
  and P.~Tabuada, ``Privacy-aware quadratic optimization using partially
  homomorphic encryption,'' in \emph{55th Conference on Decision and
  Control}.\hskip 1em plus 0.5em minus 0.4em\relax IEEE, 2016, pp. 5053--5058.

\bibitem{Freris16}
N.~M. Freris and P.~Patrinos, ``Distributed computing over encrypted data,'' in
  \emph{54th Annual Allerton Conference on Communication, Control, and
  Computing}.\hskip 1em plus 0.5em minus 0.4em\relax IEEE, 2016, pp.
  1116--1122.

\bibitem{Alexandru17}
A.~B. Alexandru, K.~Gatsis, and G.~J. Pappas, ``Privacy preserving cloud-based
  quadratic optimization,'' in \emph{55th Annual Allerton Conference on
  Communication, Control, and Computing}.\hskip 1em plus 0.5em minus
  0.4em\relax IEEE, 2017, pp. 1168--1175.

\bibitem{Lu2018privacy}
Y.~Lu and M.~Zhu, ``Privacy preserving distributed optimization using
  homomorphic encryption,'' \emph{Automatica}, vol.~96, pp. 314 -- 325, 2018.

\bibitem{deHoogh12phd}
S.~de~Hoogh, ``Design of large scale applications of secure multiparty
  computation: secure linear programming,'' Ph.D. dissertation, Eindhoven
  University of Technology, 2012.

\bibitem{Darup18}
M.~S. Darup, A.~Redder, I.~Shames, F.~Farokhi, and D.~Quevedo, ``Towards
  encrypted {MPC} for linear constrained systems,'' \emph{IEEE Control Systems
  Letters}, vol.~2, no.~2, pp. 195--200, 2018.

\bibitem{Kogiso2018upper}
K.~Kogiso, ``Upper-bound analysis of performance degradation in encrypted
  control system,'' in \emph{2018 Annual American Control Conference}.\hskip
  1em plus 0.5em minus 0.4em\relax IEEE, 2018, pp. 1250--1255.

\bibitem{Mayne00}
D.~Q. Mayne, J.~B. Rawlings, C.~V. Rao, and P.~O. Scokaert, ``Constrained model
  predictive control: Stability and optimality,'' \emph{Automatica}, vol.~36,
  no.~6, pp. 789--814, 2000.

\bibitem{MPCbook17}
F.~Borrelli, A.~Bemporad, and M.~Morari, \emph{Predictive control for linear
  and hybrid systems}.\hskip 1em plus 0.5em minus 0.4em\relax {C}ambridge
  {U}niversity {P}ress, 2017.

\bibitem{Nesterov13book}
Y.~Nesterov, \emph{Introductory lectures on convex optimization: A basic
  course}.\hskip 1em plus 0.5em minus 0.4em\relax Springer Science \& Business
  Media, 2013, vol.~87.

\bibitem{Goldreich03foundations|}
O.~Goldreich, \emph{Foundations of {C}ryptography: {V}olume 1, {B}asic
  {T}ools}.\hskip 1em plus 0.5em minus 0.4em\relax Cambridge University Press,
  2003.

\bibitem{Goldreich04foundations||}
------, \emph{Foundations of {C}ryptography: {V}olume 2, {B}asic
  {A}pplications}.\hskip 1em plus 0.5em minus 0.4em\relax Cambridge University
  Press, 2004.

\bibitem{Pannocchia11}
G.~Pannocchia, J.~B. Rawlings, and S.~J. Wright, ``Inherently robust suboptimal
  nonlinear {MPC}: theory and application,'' in \emph{50th Conference on
  Decision and Control and European Control}.\hskip 1em plus 0.5em minus
  0.4em\relax IEEE, 2011, pp. 3398--3403.

\bibitem{Richter12}
S.~Richter, C.~N. Jones, and M.~Morari, ``Computational complexity
  certification for real-time {MPC} with input constraints based on the fast
  gradient method,'' \emph{IEEE Transactions on Automatic Control}, vol.~57,
  no.~6, pp. 1391--1403, 2012.

\bibitem{GM82}
S.~Goldwasser and S.~Micali, ``Probabilistic encryption \& how to play mental
  poker keeping secret all partial information,'' in \emph{Proceedings of the
  14th annual Symposium on Theory of Computing}.\hskip 1em plus 0.5em minus
  0.4em\relax ACM, 1982, pp. 365--377.

\bibitem{DGK07}
I.~Damg{\aa}rd, M.~Geisler, and M.~Kr{\o}igaard, ``Efficient and secure
  comparison for on-line auctions,'' in \emph{Australasian Conference on
  Information Security and Privacy}.\hskip 1em plus 0.5em minus 0.4em\relax
  Springer, 2007, pp. 416--430.

\bibitem{ElGamal84}
T.~{El~Gamal}, ``A public key cryptosystem and a signature scheme based on
  discrete logarithms,'' in \emph{Workshop on the Theory and Application of
  Cryptographic Techniques}.\hskip 1em plus 0.5em minus 0.4em\relax Springer,
  1984, pp. 10--18.

\bibitem{Rivest78data}
R.~L. Rivest, L.~Adleman, and M.~L. Dertouzos, ``On data banks and privacy
  homomorphisms,'' \emph{Foundations of secure computation}, vol.~4, no.~11,
  pp. 169--180, 1978.

\bibitem{DGK09correction}
I.~Damg{\aa}rd, M.~Geisler, and M.~Kr{\o}igaard, ``A correction to
  ``{E}fficient and secure comparison for on-line auctions'',''
  \emph{International Journal of Applied Cryptography}, vol.~1, no.~4, pp.
  323--324, 2009.

\bibitem{Alexandru2018cloudQPHE}
A.~B. Alexandru, K.~Gatsis, Y.~Shoukry, S.~A. Seshia, P.~Tabuada, and G.~J.
  Pappas, ``Cloud-based quadratic optimization with partially homomorphic
  encryption,'' \emph{arXiv preprint arXiv:1809.02267}, 2018.

\bibitem{Bost15}
R.~Bost, R.~A. Popa, S.~Tu, and S.~Goldwasser, ``Machine learning
  classification over encrypted data,'' in \emph{NDSS}, 2015.

\bibitem{Veugen12}
T.~Veugen, ``Improving the {DGK} comparison protocol,'' in \emph{International
  Workshop on Information Forensics and Security}.\hskip 1em plus 0.5em minus
  0.4em\relax IEEE, 2012, pp. 49--54.

\bibitem{Jerez14}
J.~L. Jerez, P.~J. Goulart, S.~Richter, G.~A. Constantinides, E.~C. Kerrigan,
  and M.~Morari, ``Embedded online optimization for model predictive control at
  megahertz rates,'' \emph{IEEE Transactions on Automatic Control}, vol.~59,
  no.~12, pp. 3238--3251, 2014.

\bibitem{Patrinos13}
P.~Patrinos, A.~Guiggiani, and A.~Bemporad, ``Fixed-point dual gradient
  projection for embedded model predictive control,'' in \emph{European Control
  Conference}.\hskip 1em plus 0.5em minus 0.4em\relax IEEE, 2013, pp.
  3602--3607.

\bibitem{McGovern99}
L.~K. McGovern and E.~Feron, ``Closed-loop stability of systems driven by
  real-time, dynamic optimization algorithms,'' in \emph{38th Conference on
  Decision and Control}, vol.~4.\hskip 1em plus 0.5em minus 0.4em\relax IEEE,
  1999, pp. 3690--3696.

\bibitem{MPCbenchmarks}
J.~Ferreau, H.~Peyrl, D.~Kouzoupis, and A.~Zanelli, ``{MPC} {B}enchmarking
  {C}ollection,'' \url{https://github.com/ferreau/mpcBenchmarking}, 2010.

\bibitem{Hegrenaes05}
{\O}.~Hegren{\ae}s, J.~T. Gravdahl, and P.~T{\o}ndel, ``Spacecraft attitude
  control using explicit model predictive control,'' \emph{Automatica},
  vol.~41, no.~12, pp. 2107--2114, 2005.

\end{thebibliography}

\end{document}